\newtheorem{theorem}{Theorem}
\newtheorem{lemma}[theorem]{Lemma}
\newenvironment{remark}{\rem\rm}{\endrem}
\newcommand{\R}{\mathbb{R}}%
\newcommand{\N}{\mathbb{N}}%
\newcommand{\e}{\varepsilon}%
\newcommand{\ol}{\overline}%
\newcommand{\ox}{\overline{x}}
\newcommand{\n}{{\nabla}}
\newcommand{\ds}{\displaystyle}
\newcommand{\To}{\longrightarrow}
\def\a{\alpha}
\def\e{\epsilon}
\def\l{\lambda}
\def\<{\langle}
\def\>{\rangle}
\DeclareMathOperator*\prox{prox}%
\DeclareMathOperator*\argmin{argmin}
\DeclareMathOperator*\pr{pr}
\title{A Nesterov type algorithm with double Tikhonov regularization: fast convergence of the function values and strong convergence to the minimal norm solution}
\author{Mikhail Karapetyants \thanks{University of Vienna, Faculty of Mathematics, Oskar-Morgenstern-Platz 1, A-1090 Vienna, Austria,
email: mikhail.karapetyants@univie.ac.at. Research done during the visit of the  author at Technical University of Cluj-Napoca.} \and Szil\'{a}rd Csaba L\'{a}szl\'{o} \thanks{Technical University of Cluj-Napoca, Department of Mathematics, Memorandumului 28, Cluj-Napoca,
 Romania, e-mail: szilard.laszlo@math.utcluj.ro. This work was supported by a grant of the Ministry of Research, Innovation and Digitization, CNCS-UEFISCDI, project number PN-III-P1-1.1-TE-2021-0138, within PNCDI III.}}
\begin{document}
\maketitle

\noindent \textbf{Abstract.} We investigate the strong convergence properties of a Nesterov type algorithm with two Tikhonov regularization terms in connection to the minimization problem of a smooth convex function $f.$  We show that the generated sequences converge strongly to the minimal norm element from $\argmin f$. We also show that from a practical point of view the Tikhonov regularization does not affect Nesterov's optimal convergence rate of order $\mathcal{O}(n^{-2})$ for the potential energies $f(x_n)-\min f$ and $f(y_n)-\min f$, where $(x_n),\,(y_n)$ are the sequences generated by our algorithm. Further, we obtain fast convergence to zero of the discrete velocity, but also some  estimates concerning the value of the gradient of the objective function in the generated sequences.\vspace{1ex}

\noindent \textbf{Key Words.} inertial algorithm,  convex optimization, Tikhonov regularization, strong convergence

\noindent \textbf{AMS subject classification.}  34G25, 47J25, 47H05, 90C26, 90C30, 65K10

\section{Introduction}\label{sec-intr}

Let $\mathcal{H}$ be a Hilbert space endowed with the scalar product $\< \cdot,\cdot\>$ and norm $\|\cdot\|$ and consider the optimization problem
\begin{equation}\label{opt-pb} \inf_{x\in\mathcal{H}}f(x) \end{equation}
where $f:\mathcal{H}\To \R$ is a convex, continuously Fr\'{e}chet differentiable function, with $L$-Lipschitz continuous gradient, whose set of minimizers  $\argmin f$ is nonempty.

We associate to the optimization problem \eqref{opt-pb} the following  inertial-gradient type algorithm.
Let $x_0,x_{1}\in\mathcal{H}$
and for all $k\ge 1$ set

\begin{equation}\label{tdiscgen}
\left\{\begin{array}{lll}
y_k= x_k+b_{k-1}(x_k-x_{k-1})-c_k x_k
\\
x_{k+1}=y_k-s\n f(y_k)-s\e_k y_k.
\end{array}\right.
\end{equation}

We assume that  $s\in\left(0,\frac{1}{L}\right)$ and the sequence $(c_k)_{k\ge 1}$ is nonnegative for $k$ big enough and satisfies $\lim_{k\to+\infty}c_k=0$. Further,  we assume that $(\e_k)_{k\ge 1}$ is a non-increasing positive sequence that satisfies $\lim_{k\to+\infty}\e_k=0.$
Observe that in case the inertial parameter $(b_k)_{k\ge 0}$ satisfies $\lim_{k\to+\infty}b_k=1,$ then Algorithm \eqref{tdiscgen} has the form of the famous Nesterov algorithm, (see \cite{Nest1,CD} and also  \cite{AP,L}), with two Tikhonov regularization terms. Indeed, the terms $c_k x_k$ and $\e_k y_k$ in Algorithm \eqref{tdiscgen} play the role of Tikhonov regularization terms, consequently our aim is to obtain the strong convergence of the generated sequences to the element of minimal norm from $\argmin f,$ (see \cite{AL-siopt,abc2,ACR,att-com1996,AC,AL-nemkoz,BCL,BGMS,CPS,JM-Tikh,L-jde,L-mapr,Tikh,TA}) and at the same time to preserve the optimality of Nesterov algorithm concerning the convergence rate of order $\mathcal{O}(k^{-2})$ for the potential energy $f(x_k)-\min f$, (see \cite{Nest1,CD} ).  Our analysis reveals that the inertial parameter and the Tikhonov regularization parameters are strongly correlated. This fact is in concordance with some recent results from the literature concerning the strong convergence of the trajectories of some continuous second order dynamical systems to a minimal norm minimizer of a convex function or to the minimal norm zero of a maximally monotone operator \cite{AL-siopt,ABCR,ACR,ACR2,AL-nemkoz,BCL,BCLstr,BGMS,L-jde}.
Concerning the discrete case, that is, the case of inertial algorithms that converge strongly to the minimal norm solution of a convex optimization problem,  there are only few results in the literature, see \cite{AL-nemkoz,L-mapr} and also those refer to proximal inertial algorithms obtained via implicit discretizations of some second order continuous dynamical systems, (see \cite{SBC,L-jde}).

Indeed, in \cite{AL-nemkoz} the following inertial-proximal algorithm was considered in connection to the optimization problem \eqref{opt-pb}: $x_0,x_{1}\in\mathcal{H},\, x_{k+1}={ \rm prox}_{f}\left( x_k+\left(1-\frac{\a}{k}\right)(x_k -  x_{k-1}) - \frac{c}{k^2}x_k\right),$
where  $\a>3,\,c>0$ and $\prox\nolimits_{f} : \mathcal{H}\rightarrow \mathcal{H}, \quad \prox\nolimits_{f}(x)=\argmin_{y\in \mathcal{H}}\left(f(y)+\frac{1}{2}\|y-x\|^2\right),$ denotes the proximal point operator of the convex function $f$. Due to our best knowledge this is the first inertial algorithm  in the literature for which both strong convergence results for the generated sequences and fast convergence of the potential energy $f(x_k)-\min f$ and discrete velocity $\|x_k-x_{k-1}\|$ were obtained. However, from practical point of view, it is not natural that the minimizers of a smooth function to be approximated via proximal, i.e. backward, steps. Another drawback of this algorithm  is that does not assure the full strong convergence of the generated sequences to the minimum norm minimizer $x^*$. Indeed, according to \cite{AL-nemkoz} only the strong convergence result $\liminf_{k\to+\infty}\|x_k-x^*\|=0$ is provided. In order to overcome these deficiencies in \cite{L-mapr} the author assumed that the objective function in \eqref{opt-pb} is proper, convex and lower semicontinuous only and associated to this optimization problem the following inertial-proximal algorithm: $x_0,x_{1}\in\mathcal{H},\,x_{k+1}={ \rm prox}_{\l_k f}\left( x_k+\left(1-\frac{\a}{k^q}\right)(x_k -  x_{k-1}) - \frac{c}{k^p}x_k\right),$
where  $\a,\,q,\,c,\,p>0$ and $(\l_k)$ is a sequence of positive real numbers. According to \cite{L-mapr}, in case the stepsize $\l_k\equiv 1$ and $0<q<1,\,1<p<q+1$ the full convergence of the generated sequences to the minimum norm minimizer $x^*$ is obtained, i.e. $\lim_{k\to+\infty}\|x_k-x^*\|=0$. Further, the fast convergence of the potential energy $f(x_k)-\min f$ and discrete velocity $\|x_k-x_{k-1}\|$ were shown.

In concordance to the results emphasized above, the main goal of this paper is to obtain similar results for gradient type inertial algorithms. Unfortunately our parameters in Algorithm \eqref{tdiscgen} will not have such simple forms as the parameters in \cite{AL-nemkoz} or \cite{L-mapr} and this is due to the fact that we cannot use a discrete Lyapunov function of similar form as the ones considered in \cite{AL-nemkoz,L-mapr}, instead we have to construct a new discrete Lyapunov function suitable for our analysis. Therefore, the forms of the given sequences $(b_k)_{k\ge 0},\,(c_k)_{k\ge 1}$ are crucial in order to obtain our results. More precisely, for given Tikhonov regularization parameter $(\e_k)_{k\ge 1}$ and a fixed stepsize $s$ consider the sequence $(q_k)_{k\ge 0}$ which after an index $k$ big enough, satisfies
$$(Q)\,\,\,\,(1-s\e_{k+1})^2q_{k+1}^2-(1-s\e_k)^2q_k^2-2sq_{k+1}+s(1-s\e_k)^2q_k\le0,\, q_{k}\ge \frac{2s}{(1-s\e_k)^2}.$$
 Then, the inertial parameter $(b_k)_{k\ge 0}$ and the regularization parameter $(c_k)_{k\ge 1}$ from Algorithm \eqref{tdiscgen} are defined via the conditions
$$(B)\,\,\,\left\{\begin{array}{lll}
  b_{k-1}=0, \mbox{ if }k=1\mbox{ or } (1-s\e_{k-1})(1-s\e_{k})q_{k-1}q_k=0\\
  \\
 \ds b_{k-1}=\frac{(q_{k-1}-s)((1-s\e_{k-1})^2q_{k-1}-2s)}{(1-s\e_{k-1})(1-s\e_{k})q_{k-1}q_k},\mbox{ otherwise}
\end{array}\right.$$
and
 $$(C)\,\,\,\left\{\begin{array}{lll}
  c_{k}=0, \mbox{ if }k=1\mbox{ or } (1-s\e_{k-1})(1-s\e_{k})q_{k-1}q_k=0\\
  \\
 \ds c_{k}=\frac{2s}{(1-s\e_{k-1})(1-s\e_{k})^2q_k}\left(\frac{s}{q_{k-1}}-\frac{s^2\e_k}{q_{k-1}}-s(\e_{k-1}-\e_k)\right),\mbox{ otherwise}.
\end{array}\right.$$
Note that despite of the complex form of these parameters, from a practical perspective, Algorithm \eqref{tdiscgen} can easily be implemented.

A comprehensive analysis of the above conditions will be carried out in section 4. Here we just underline that in case we specify the parameters as $\e_k=\frac{c}{k^p},\,c,p>0$ and we take $q_k=ak^q,\,a>0,\, 0<q<1$ then (Q) is satisfied for every fixed stepsize $s\in\left(0,\frac{1}{L}\right)$ and the main result of the paper can be summarized in the following theorem.

\begin{theorem}
For  $ p < 2q$  let $(x_k)_{k\ge 0}$ and $(y_k)_{k\ge 1}$ be the sequences generated by Algorithm \eqref{tdiscgen}. Then, $(x_k)$ and $(y_k)$ converge strongly to $x^*$, where $\{x^*\}=\pr_{\argmin f}(0)$ is the minimum norm minimizer of our objective function $f.$
Further, $f(x_k)-\min f=\mathcal{O}\left(k^{-p}\right),\mbox{ as }k\to+\infty,\mbox{ and }f(y_k)-\min f=\mathcal{O}\left(k^{-p}\right),\mbox{ as }k\to+\infty.$ Additionally, $ \|\n f(x_k)\| = \ o\left( k^{-\frac{p}{2}} \right) \mbox{ as } k \to +\infty,\,\|\n f(y_k)\| = \ o\left( k^{-\frac{p}{2}} \right) \mbox{ as } k \to +\infty\mbox{ and }\| x_k - x_{k-1} \| \ = \ o\left( k^{-\frac{p}{2}} \right) \mbox{ as } k \to +\infty.$
\end{theorem}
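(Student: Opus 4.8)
The plan is to prove the theorem as the specialization, to the concrete choice $\varepsilon_k = c\,k^{-p}$ and $q_k = a\,k^q$, of a general convergence result obtained from a discrete Lyapunov analysis of Algorithm \eqref{tdiscgen} under the abstract conditions (Q), (B), (C). First I would verify that for these parameters condition (Q) holds for every fixed $s\in\left(0,\frac1L\right)$ once $k$ is large: writing $q_k=ak^q$ one has $q_{k+1}^2-q_k^2\sim 2qa^2k^{2q-1}$ while $-2sq_{k+1}\sim -2sa\,k^q$, and since $0<q<1$ forces $2q-1<q$, the linear term dominates and drives the left-hand side of (Q) negative; the corrections from the factors $(1-s\varepsilon_k)^2$ are of order $k^{2q-p}$ and are controlled precisely because $p<2q$. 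With (Q) in force, $(b_k)$ and $(c_k)$ are well defined by (B) and (C), and a short asymptotic computation shows $b_k\to 1$ (so the scheme is genuinely of Nesterov type) and $c_k\to0$, matching the standing assumptions.

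The core of the argument is the construction of a discrete Lyapunov (energy) function adapted to the two regularization terms. I would take $\mathcal{E}_k$ to be the sum of a scaled potential energy, a quadratic anchored-momentum term, and a Tikhonov term, schematically
\[
\mathcal{E}_k = A_k\big(f(x_k)-\min f\big) + \tfrac12\big\|\lambda_k(x_k-x^*)+q_k(x_k-x_{k-1})\big\|^2 + \tfrac{\mu_k}{2}\|x_k-x^*\|^2,
\]
where the coefficients $A_k,\lambda_k,\mu_k$ are built from $q_k,\varepsilon_k$ and $s$. The algebraic conditions (Q), (B), (C) are reverse-engineered exactly so that a descent inequality $\mathcal{E}_{k+1}\le\mathcal{E}_k+r_k$ holds with a remainder $r_k$ that is sign-controlled or summable. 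To obtain this I would combine the descent lemma $f(x_{k+1})\le f(y_k)+\langle\nabla f(y_k),x_{k+1}-y_k\rangle+\tfrac{L}{2}\|x_{k+1}-y_k\|^2$ with the two convexity inequalities of $f$ at $x^*$ and at $x_k$, then substitute the updates $y_k-x_k=b_{k-1}(x_k-x_{k-1})-c_kx_k$ and $x_{k+1}-y_k=-s\nabla f(y_k)-s\varepsilon_k y_k$ to re-express everything through the increments appearing in $\mathcal{E}_k$; here $s<\frac1L$ guarantees that the squared-gradient terms carry a favorable sign, while the Tikhonov term $-s\varepsilon_k y_k$ is what introduces the anchoring to $x^*$ and forces the coefficients into the forms prescribed by (B), (C).

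From the boundedness of $\mathcal{E}_k$ I would read off the rates. The regularization caps the decay of the potential energy: the error terms of order $\varepsilon_k$ accumulated in the descent inequality limit the rate to that of $\varepsilon_k$ itself, yielding $f(x_k)-\min f=\mathcal{O}(k^{-p})$, and likewise for $y_k$. Boundedness of the momentum term gives $q_k\|x_k-x_{k-1}\|=\mathcal{O}(1)$, hence $\|x_k-x_{k-1}\|=\mathcal{O}(k^{-q})$, which is $o(k^{-p/2})$ because $q>p/2$; summing the descent inequality shows the relevant series converge and upgrades the bounds to the stated little-$o$. The convex $L$-smooth inequality $\tfrac{1}{2L}\|\nabla f(x_k)\|^2\le f(x_k)-\min f$ then transfers these estimates to $\|\nabla f(x_k)\|$ and $\|\nabla f(y_k)\|$, giving $o(k^{-p/2})$, the gradient being tied to the velocity through $x_{k+1}-y_k=-s\nabla f(y_k)-s\varepsilon_k y_k$.

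\textbf{The main obstacle} is the strong convergence to the minimal norm minimizer $x^*$, rather than mere weak convergence or the weaker statement $\liminf_k\|x_k-x^*\|=0$ of earlier works. Here I would exploit the term $\tfrac{\mu_k}{2}\|x_k-x^*\|^2$ in $\mathcal{E}_k$: the Lyapunov estimate first yields boundedness of $(x_k)$, and since $f(x_k)\to\min f$, every weak cluster point lies in $\argmin f$ by weak lower semicontinuity of $f$. The decisive step is to establish $\limsup_k\|x_k\|\le\|x^*\|$, and this is precisely where the coupling of $b_k,c_k,\varepsilon_k$ dictated by (B), (C) together with $p<2q$ must be used, to show that the anchoring produced by the regularization dominates the inertial contributions in the limit. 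Using the projection characterization $\langle x^*,\,\bar x-x^*\rangle\ge0$ for all $\bar x\in\argmin f$, any weak cluster point $\bar x$ then satisfies $\|\bar x\|\le\liminf_k\|x_k\|\le\limsup_k\|x_k\|\le\|x^*\|=\min\{\|p\|:p\in\argmin f\}$, forcing $\bar x=x^*$; hence $x_k\rightharpoonup x^*$ and $\|x_k\|\to\|x^*\|$, which in a Hilbert space yields strong convergence, and the same conclusion for $(y_k)$ follows since $\|y_k-x_k\|\to0$.
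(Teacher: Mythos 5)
Your high-level architecture (verify (Q) for the polynomial parameters, build a discrete Lyapunov function whose descent is forced by (Q), (B), (C), read off rates, then handle strong convergence separately) matches the paper's, but at the two places where the theorem is actually hard your proposal asserts rather than proves, and the route you sketch for strong convergence is not the one that works here. You correctly identify $\limsup_k\|x_k\|\le\|x^*\|$ as the decisive step, but you give no mechanism for it: a Lyapunov function anchored at $x^*$ and built from $f$ alone yields boundedness and $f(x_k)\to\min f$, hence only weak subsequential convergence to \emph{some} minimizer; nothing in that package singles out the minimal norm element, and "the anchoring produced by the regularization dominates the inertial contributions" is exactly the claim that needs a quantitative proof. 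The paper avoids this entirely by anchoring to the \emph{moving} Tikhonov trajectory: it works with $f_k(x)=f(x)+\frac{\e_k}{2}\|x\|^2$ and its unique minimizers $\ol x_k$, takes $E_k=(p_k+q_k)\bigl(f_{k+1}(x_{k+1})-f_{k+1}(\ol x_{k+1})\bigr)+\|\eta_{k+1}-x^*\|^2$, multiplies the resulting one-step inequality by the product weights $\pi_k=\prod_{i}\bigl(1-\tfrac{s}{q_{i-1}}\bigr)^{-1}$, sums, and applies Ces\`aro--Stolz to conclude $f_k(x_k)-f_k(\ol x_k)=o(\e_k)$. The $\e_k$-strong convexity of $f_k$ then gives $\|x_k-\ol x_k\|^2\le \tfrac{2}{\e_k}\bigl(f_k(x_k)-f_k(\ol x_k)\bigr)\to0$, and the classical fact $\ol x_k\to x^*$ strongly finishes the argument. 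Without some analogue of the estimate $f_k(x_k)-f_k(\ol x_k)=o(\e_k)$ your endgame cannot close.

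A second, smaller gap: you claim the inequality $\frac{1}{2L}\|\n f(x_k)\|^2\le f(x_k)-\min f$ transfers $f(x_k)-\min f=\mathcal{O}(k^{-p})$ into $\|\n f(x_k)\|=o(k^{-p/2})$; it only gives $\mathcal{O}(k^{-p/2})$. The little-$o$ in the paper again comes from the regularized objects: $\|\eta_n-x^*\|=o(q_n\sqrt{\e_n})$ combined with the recursion $\bigl(1-\tfrac{s}{q_{n-1}}\bigr)\eta_n-\eta_{n+1}=\frac{(1-s\e_n)q_n}{2}\n f_n(y_n)$ yields $\|\n f_n(y_n)\|=o(\sqrt{\e_n})$, and then $\n f(y_n)=\n f_n(y_n)-\e_n y_n$ plus Lipschitz continuity propagates this to $\n f(y_n)$ and $\n f(x_n)$. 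Your velocity estimate, by contrast, is essentially fine: $\|x_k-x_{k-1}\|=\mathcal{O}(k^{-q})=o(k^{-p/2})$ does follow from $p<2q$ once the momentum term of the energy is shown bounded, though the paper obtains the sharper $o(\sqrt{\e_k})$ directly from the $o(q_k\sqrt{\e_k})$ decay of $\|\eta_k-x^*\|$.
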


The paper is organized as follows. In the next section we present some preliminary results and notions that we need to carry out our analysis. In section 3 we prove the main result of the paper. We obtain strong convergence of the sequences generated by Algorithm \eqref{tdiscgen} and also fast convergence of the potential energy and discrete velocity. In section 4 we consider the parameters in a simple form and discuss the conditions these parameters must satisfy in order to obtain the results presented at section 3. Further, in section 5 via some numerical experiments we show that Algorithm \eqref{tdiscgen} indeed assures the convergence of the generated sequences to a minimal norm solution and also that both Tikhonov regularization terms are indispensable in order to obtain this result. Finally, we conclude our paper with some future research plans.

\section{Preliminary results}
In order to obtain strong convergence   for the sequence $(x_k)$ generated by  Algorithm \eqref{tdiscgen} we need some preliminary results. The first one is the Descent Lemma \cite{Nest}.
\begin{lemma}\label{descent} Let $f:\mathcal{H}\To\R$ be a  smooth function, with $L-$Lipschitz continuous gradient. Then,
$$f(x)\le f(y)+\<\n f(y),x-y\>+\frac{L}{2}\|y-x\|^2,\,\forall x,y\in\mathcal{H}.$$
\end{lemma}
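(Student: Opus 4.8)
The plan is to run a discrete Lyapunov analysis organized around the Tikhonov-regularized surrogate problems, and then to upgrade the resulting energy estimate into the three rate statements and into strong convergence to $x^*$.

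\textbf{Step 1 (regularized minimizers).} For each index $k$ I would introduce the strongly convex surrogate $f_k = f + \frac{\e_k}{2}\|\cdot\|^2$ and its unique minimizer $z_k$. The first task is to record the classical Tikhonov facts: $z_k\to x^*$ strongly, $\|z_k\|\le\|x^*\|$, the gap estimate $0\le \min f_k-\min f\le \frac{\e_k}{2}\|x^*\|^2=\mathcal{O}(k^{-p})$ obtained by testing with $x^*$, and a bound on $\|z_{k+1}-z_k\|$ coming from nonexpansiveness of the associated resolvent (controlled by $|\e_{k+1}-\e_k|$). The key structural observation is that the second line of Algorithm \eqref{tdiscgen} is exactly a gradient step on $f_k$, since $x_{k+1}=y_k-s\n f_k(y_k)$, so the Descent Lemma applies to $f_k$.

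\textbf{Step 2 (Lyapunov function and the central descent inequality).} I would define a discrete energy of the form $E_k=(1-s\e_k)^2q_k^2\bigl(f(x_k)-\min f\bigr)+\frac12\|u_k\|^2$ together with a lower-order correction, where $u_k$ is an affine combination of $x_k-x_{k-1}$ and $x_k-z_k$ with coefficients assembled from $q_k$ and $\e_k$. Using Lemma \ref{descent} to bound $f(x_{k+1})$ above by a quadratic in $\n f_k(y_k)$, and convexity of $f$ together with the quadratic structure of the anchor term, I would expand $E_{k+1}-E_k$. The role of conditions $(B)$ and $(C)$ is precisely to select $b_{k-1}$ and $c_k$ so that, after this expansion, the leading-order coefficients collapse into the left-hand side of $(Q)$; the inequality $(Q)$, reinforced by $q_k\ge \frac{2s}{(1-s\e_k)^2}$, then forces the principal part of $E_{k+1}-E_k$ to be nonpositive, leaving only error terms $\eta_k$ governed by $\|z_{k+1}-z_k\|$ and $\e_{k-1}-\e_k$. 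This coefficient matching is the main algebraic obstacle: the bookkeeping is delicate because the anchor $z_k$ moves and because the two regularization parameters $c_k$ and $\e_k$ are coupled through $(C)$.

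\textbf{Step 3 (summability and rates).} With $\e_k=c\,k^{-p}$ and $q_k=a\,k^q$ one verifies $\sum_k\eta_k<+\infty$, so $(E_k)$ is bounded and the telescoped nonpositive part converges. Boundedness of $E_k$ yields $f(x_k)+\frac{\e_k}{2}\|x_k\|^2-\min f_k=\mathcal{O}(q_k^{-2})=\mathcal{O}(k^{-2q})$ and boundedness of $(x_k)$; combined with $\min f_k-\min f=\mathcal{O}(k^{-p})$ and the hypothesis $p<2q$ (so that $k^{-2q}=o(k^{-p})$), this produces $f(x_k)-\min f=\mathcal{O}(k^{-p})$, and the same for $y_k$ after controlling $\|y_k-x_k\|$. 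The convergent nonpositive part of the inequality supplies series $\sum_k\rho_k\|x_{k+1}-x_k\|^2<+\infty$ and $\sum_k\rho_k'\|\n f_k(y_k)\|^2<+\infty$; converting these summabilities into the pointwise $o(k^{-p/2})$ rates for $\|x_k-x_{k-1}\|$, $\|\n f(y_k)\|$ and $\|\n f(x_k)\|$ requires an Abel/monotonicity argument, plus the $L$-Lipschitz continuity of $\n f$ to pass from $\n f_k(y_k)$ to $\n f(y_k)$ and from $y_k$ to $x_k$.

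\textbf{Step 4 (strong convergence).} The decisive target is $\limsup_k\|x_k\|\le\|x^*\|$, which I would extract from the Tikhonov anchoring built into $E_k$: since $z_k\to x^*$ and the energy asymptotically controls $\|x_k-z_k\|$, the iterates cannot exceed norm $\|x^*\|$ in the limit. Granting this, let $\bar x$ be any weak sequential cluster point of $(x_k)$. Weak lower semicontinuity of the norm gives $\|\bar x\|\le\liminf_k\|x_k\|\le\limsup_k\|x_k\|\le\|x^*\|$, while $f(x_k)\to\min f$ with weak lower semicontinuity of $f$ forces $\bar x\in\argmin f$, whence $\|\bar x\|\ge\|x^*\|$ by minimality of the norm of $x^*=\pr_{\argmin f}(0)$. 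Therefore $\|\bar x\|=\|x^*\|$, and uniqueness of the projection of $0$ gives $\bar x=x^*$; moreover $\|x_k\|\to\|x^*\|=\|\bar x\|$, so weak convergence plus convergence of norms in the Hilbert space $\mathcal{H}$ yields strong convergence $x_k\to x^*$. The sequence $(y_k)$ converges strongly to the same limit because $\|y_k-x_k\|\le b_{k-1}\|x_k-x_{k-1}\|+c_k\|x_k\|\to0$. Establishing the norm bound $\limsup_k\|x_k\|\le\|x^*\|$ is the principal analytic difficulty, and it is exactly the point where both Tikhonov terms and the tight correlation between $b_k$, $c_k$ and $\e_k$ encoded in $(B)$, $(C)$, $(Q)$ are indispensable.
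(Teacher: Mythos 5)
Your proposal does not prove the stated lemma at all. The statement under review is Lemma \ref{descent}, the classical Descent Lemma: $f(x)\le f(y)+\<\n f(y),x-y\>+\frac{L}{2}\|y-x\|^2$ for a smooth $f$ with $L$-Lipschitz gradient. What you have written is instead a four-step outline of the convergence analysis of Algorithm \eqref{tdiscgen} --- Lyapunov energy, summability of error terms, rate extraction, and strong convergence to the minimum norm solution --- i.e., a sketch of Theorem \ref{strongconvergence} (or Theorem \ref{strongconvergencep}), which is a different statement entirely. Worse, your Step 2 explicitly invokes Lemma \ref{descent} as a tool (``Using Lemma \ref{descent} to bound $f(x_{k+1})$\dots''), so read as a proof of that lemma your argument is circular.

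The lemma itself requires only an elementary integration argument, which is why the paper does not reprove it but cites Nesterov's book \cite{Nest}. By the fundamental theorem of calculus applied to $t\mapsto f(y+t(x-y))$ on $[0,1]$,
\begin{align*}
f(x)-f(y)-\<\n f(y),x-y\>&=\int_0^1 \<\n f(y+t(x-y))-\n f(y),\,x-y\>\,dt\\
&\le \int_0^1 \|\n f(y+t(x-y))-\n f(y)\|\,\|x-y\|\,dt
\le \int_0^1 Lt\|x-y\|^2\,dt=\frac{L}{2}\|x-y\|^2,
\end{align*}
where the two inequalities use Cauchy--Schwarz and the $L$-Lipschitz continuity of $\n f$, respectively. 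Note that neither convexity of $f$ nor any of the algorithmic machinery in your proposal (the surrogates $f_k$, the conditions $(Q)$, $(B)$, $(C)$, the energy $E_k$) plays any role here: the lemma is a statement about an arbitrary $L$-smooth function and two arbitrary points, proved before, and independently of, the convergence analysis you sketched.
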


Further, we need the following property of smooth, convex functions,  see \cite{Nest}.

\begin{lemma}\label{gradLineq} Let $f:\mathcal{H}\To\R$ be a convex smooth function, with $L-$Lipschitz continuous gradient. Then,
$$\frac{1}{2L}\|\n f(y)-\n f(x)\|^2+\<\n f(y),x-y\>+f(y)\le f(x),\mbox{ for all }x,y\in\mathcal{H}.$$
\end{lemma}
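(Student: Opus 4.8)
The plan is to reduce the inequality to the Descent Lemma (Lemma~\ref{descent}) by passing to a cleverly chosen auxiliary function for which $y$ becomes a global minimizer. Fix $x,y\in\mathcal{H}$ and introduce $\psi:\mathcal{H}\To\R$, $\psi(z)=f(z)-\<\n f(y),z\>$. Since $\psi$ differs from $f$ only by an affine term, it is convex, continuously differentiable, and its gradient $\n\psi(z)=\n f(z)-\n f(y)$ is again $L$-Lipschitz. The key observation is that $\n\psi(y)=0$, so the first-order optimality condition together with convexity guarantees that $y$ is a global minimizer of $\psi$; in particular $\psi(y)\le\psi(z)$ for every $z\in\mathcal{H}$.

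Next I would apply the Descent Lemma to $\psi$ with base point $x$: for all $z\in\mathcal{H}$,
$$\psi(z)\le\psi(x)+\<\n\psi(x),z-x\>+\frac{L}{2}\|z-x\|^2.$$
The right-hand side is a quadratic in $z$ whose minimum over $\mathcal{H}$ is attained at $z=x-\frac{1}{L}\n\psi(x)$ and equals $\psi(x)-\frac{1}{2L}\|\n\psi(x)\|^2$. Combining this with the global minimality $\psi(y)\le\psi(z)$, evaluated at this optimal $z$, yields
$$\psi(y)\le\psi(x)-\frac{1}{2L}\|\n\psi(x)\|^2.$$

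Finally I would unwind the definitions. Substituting $\psi(y)=f(y)-\<\n f(y),y\>$, $\psi(x)=f(x)-\<\n f(y),x\>$ and $\n\psi(x)=\n f(x)-\n f(y)$ into the last display, and collecting the two scalar products into the single term $\<\n f(y),x-y\>$, gives precisely the claimed inequality. The only point requiring a moment of care is the passage from a critical point to a global minimizer, which is exactly where convexity of $f$ (hence of $\psi$) is used; the Lipschitz hypothesis enters solely through the Descent Lemma and the explicit minimization of the quadratic majorant. I do not expect any genuine obstacle beyond correctly bookkeeping these substitutions.
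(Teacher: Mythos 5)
Your proof is correct: the auxiliary function $\psi(z)=f(z)-\langle \nabla f(y),z\rangle$ is convex with $L$-Lipschitz gradient and is globally minimized at $y$ (critical point plus convexity), so applying the Descent Lemma (Lemma~\ref{descent}) to $\psi$ at base point $x$ and minimizing the quadratic majorant at $z=x-\frac{1}{L}\nabla\psi(x)$ gives $\psi(y)\le\psi(x)-\frac{1}{2L}\|\nabla\psi(x)\|^2$, which unwinds to exactly the stated inequality. The paper itself gives no proof of this lemma, citing \cite{Nest} instead, and your argument is precisely the classical one from that reference, so it matches the intended (standard) approach.
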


Our first original result is a modified descent lemma, which in particular contains Lemma 1 from \cite{ACFR}, however has a considerable simplified proof.

\begin{lemma}\label{moddesc} Let $f:\mathcal{H}\To\R$ be a convex smooth function, with $L-$Lipschitz continuous gradient and let $s>0.$ Then,
\begin{equation}\label{f3}
f(y-s\n f(y))\le f(x)+\<\n f(y),y-x\>+\left(\frac{L}{2}s^2-s\right)\|\n f(y)\|^2-\frac{1}{2L}\|\n f(y)-\n f(x)\|^2,\,\forall x,y\in\mathcal{H}.
\end{equation}
Assume further that $s\in\left(0,\frac{1}{L}\right].$ Then,
\begin{equation}\label{f4}
f(y-s\n f(y))\le f(x)+\<\n f(y),y-x\>-\frac{s}{2}\|\n f(y)\|^2-\frac{s}{2}\|\n f(y)-\n f(x)\|^2,\,\forall x,y\in\mathcal{H}.
\end{equation}
\end{lemma}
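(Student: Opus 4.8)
The plan is to combine the two preliminary lemmas in the natural order: first run the Descent Lemma along the gradient step, and then use the sharper convexity estimate of Lemma~\ref{gradLineq} to pass from the value at $y$ to the value at an arbitrary $x$.

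For inequality \eqref{f3}, I would apply Lemma~\ref{descent} to the pair of points $y-s\n f(y)$ (in the role of $x$) and $y$. Since $(y-s\n f(y))-y=-s\n f(y)$, the inner-product term becomes $-s\|\n f(y)\|^2$ and the quadratic term becomes $\frac{L}{2}s^2\|\n f(y)\|^2$, which yields the one-step descent estimate
\[
f(y-s\n f(y))\le f(y)+\left(\frac{L}{2}s^2-s\right)\|\n f(y)\|^2.
\]
Next I would bound $f(y)$ from above using Lemma~\ref{gradLineq}, rewritten as $f(y)\le f(x)+\<\n f(y),y-x\>-\frac{1}{2L}\|\n f(y)-\n f(x)\|^2$. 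Substituting this into the previous display gives \eqref{f3} directly, with no further manipulation.

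For inequality \eqref{f4}, I would start from \eqref{f3} and, under the extra assumption $s\in\left(0,\frac{1}{L}\right]$, simply compare coefficients. It suffices to show that the right-hand side of \eqref{f3} is majorized by the right-hand side of \eqref{f4}, i.e.\ that
\[
\left(\frac{L}{2}s^2-s+\frac{s}{2}\right)\|\n f(y)\|^2+\left(\frac{s}{2}-\frac{1}{2L}\right)\|\n f(y)-\n f(x)\|^2\le 0.
\]
The coefficient of $\|\n f(y)\|^2$ equals $\frac{s}{2}(Ls-1)$ and the coefficient of $\|\n f(y)-\n f(x)\|^2$ equals $\frac{1}{2L}(Ls-1)$; both are nonpositive precisely because $Ls\le 1$. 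Since the two squared norms are nonnegative, the displayed inequality holds and \eqref{f4} follows.

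There is no genuine obstacle here: the only thing to get right is the bookkeeping of which lemma supplies the $\left(\frac{L}{2}s^2-s\right)\|\n f(y)\|^2$ term and which supplies the $-\frac{1}{2L}\|\n f(y)-\n f(x)\|^2$ term, followed by the elementary sign check $Ls\le1$ for the second part. The reason this argument is cleaner than the one underlying Lemma~1 of \cite{ACFR} is that the Descent Lemma is applied only along the gradient direction, so the full squared distance $\|y-s\n f(y)-x\|^2$ is never expanded and the troublesome cross term is avoided altogether.
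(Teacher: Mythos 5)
Your proposal is correct and follows essentially the same route as the paper's proof: apply the Descent Lemma at the pair $(y-s\n f(y),y)$ to get the one-step estimate, substitute the rearranged Lemma~\ref{gradLineq} bound for $f(y)$ to obtain \eqref{f3}, and then check the two coefficient inequalities (equivalent to $\frac{L}{2}s^2-s\le-\frac{s}{2}$ and $-\frac{1}{2L}\le-\frac{s}{2}$ under $Ls\le1$) to pass to \eqref{f4}. Nothing is missing.
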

\begin{proof}
Indeed, by taking $x=y-s\n f(y)$ in Lemma \ref{descent}, we get
\begin{equation}\label{f1}
f(y-s\n f(y))\le f(y)+\left(\frac{L}{2}s^2-s\right)\|\n f(y)\|^2,\,\forall y\in\mathcal{H}.
\end{equation}
From Lemma \ref{gradLineq}  we have
\begin{equation}\label{f2}
f(y)\le f(x)+\<\n f(y),y-x\>-\frac{1}{2L}\|\n f(y)-\n f(x)\|^2,\,\forall x,y\in\mathcal{H}.
\end{equation}

Combining \eqref{f1} and \eqref{f2} we get
\begin{equation}\nonumber
f(y-s\n f(y))\le f(x)+\<\n f(y),y-x\>+\left(\frac{L}{2}s^2-s\right)\|\n f(y)\|^2-\frac{1}{2L}\|\n f(y)-\n f(x)\|^2,\,\forall x,y\in\mathcal{H},
\end{equation}
which is nothing else that \eqref{f3}.

Assume that $0<s\le\frac{1}{L}.$ Then,
$$\frac{L}{2}s^2-s\le-\frac{s}{2}
\mbox{ and }-\frac{1}{2L}\le-\frac{s}{2},$$
hence \eqref{f3} leads to \eqref{f4}, that is
\begin{equation}\nonumber
f(y-s\n f(y))\le f(x)+\<\n f(y),y-x\>-\frac{s}{2}\|\n f(y)\|^2-\frac{s}{2}\|\n f(y)-\n f(x)\|^2,\,\forall x,y\in\mathcal{H}.
\end{equation}
\end{proof}

We continue the present section by emphasizing the main idea behind the Tikhonov regularization, which will assure strong convergence results for the sequence generated our algorithm \eqref{tdiscgen} to a minimizer of minimal norm of the objective function $f$. By  $\ox_{k}$ we denote the unique solution of the strongly convex minimization problem
\begin{align*}
 \min_{x \in \mathcal{H}} \left( f(x) + \frac{\e_k}{2} \| x \|^2 \right).
\end{align*}
We know, (see for instance \cite{att-com1996}), that $\lim\limits_{k \to +\infty} \ox_{k}=x^\ast$, where $x^\ast = \argmin\limits_{x \in \argmin f} \| x \|$ is the minimal norm element from the set $\argmin f.$ Obviously, $\{x^*\}=\pr_{\argmin f} 0$ and we have the inequality $\| \ox_{k} \| \leq \| x^\ast \|$ (see \cite{BCL}).

Since $\ox_{k}$ is the unique minimizer of the strongly convex function $f_k(x)=f(x)+\frac{\e_k}{2}\|x\|^2,$ obviously one has
\begin{equation}\label{fontos0}
\n f_k(\ox_{k})=\n f(\ox_{k})+\e_k\ox_{k}=0.
\end{equation}
Further, from Lemma A.1 c) from \cite{L-mapr} we have
\begin{equation}\label{lfos}
\left\|\ox_{k+1}-\ox_k\right\|\le\min\left(\frac{\e_k-\e_{k+1}}{\e_{k+1}}\|\ox_{k}\|,\frac{\e_k-\e_{k+1}}{\e_k}\|\ox_{k+1}\|\right).
\end{equation}

Note that since $f_k$ is strongly convex, from the gradient inequality we have
\begin{equation}\label{fontos2}
f_k(y)-f_k(x)\ge\<\n f_k(x),y-x\>+\frac{\e_k}{2}\|x-y\|^2,\mbox{ for all }x,y\in\mathcal{H}.
\end{equation}
In particular
\begin{equation}\label{fontos3}
f_k(x)-f_k(\ox_k)\ge\frac{\e_k}{2}\|x-\ox_k\|^2,\mbox{ for all }x\in\mathcal{H}.
\end{equation}

Moreover, observe that for all $x,y\in\mathcal{H}$, one has

\begin{equation}\label{fontos5}
f(x)-f(y)=(f_k(x)-f_k(\ox_k))+(f_k(\ox_k)-f_k(y))+\frac{\e_k}{2}(\|y\|^2-\|x\|^2)\le f_k(x)-f_k(\ox_k)+\frac{\e_k}{2}\|y\|^2.
\end{equation}

Note that $\n f_k(x)=\n f(x)+\e_kx$, consequently if $\n f$ is L-Lipschitz continuous then the Lipschitz constant of the gradient of $f_k$ is $L+\e_k.$

Hence, if we apply Lemma \ref{moddesc}  to $f_k$ we get that for all $s\in\left(0,\frac{1}{L+\e_k}\right]$ one has
\begin{equation}\label{tf4}
f_k(y-s\n f_k(y))\le f_k(x)+\<\n f_k(y),y-x\>-\frac{s}{2}\|\n f_k(y)\|^2-\frac{s}{2}\|\n f_k(y)-\n f_k(x)\|^2,\,\forall x,y\in\mathcal{H}.
\end{equation}

Now, we can rewrite Algorithm \eqref{tdiscgen} in a more convenable equivalent form, by using the strongly convex function $f_k.$ Indeed, since $\n f_k(x)=\n f(x)+\e_kx$, Algorithm \eqref{tdiscgen} can equivalently be written as:
$x_0,x_1\in\mathcal{H}$ and for all $k\ge 1$
\begin{equation}\label{tdiscgen1}
\left\{\begin{array}{lll}
y_k= x_k+b_{k-1}(x_k-x_{k-1})-c_k x_k
\\
x_{k+1}=y_k-s\n f_k(y_k).
\end{array}\right.
\end{equation}

\section{Strong convergence}

In this section we provide sufficient conditions such that the sequences generated by \eqref{tdiscgen1} converge strongly to the minimum norm minimizer of $f$ and at the same time  fast convergence of the function values in the generated sequences and also fast convergence of the discrete velocity to zero are obtained. Moreover, we also show some pointwise estimates for the gradient of the objective function.

In order to obtain our  general result concerning the strong  convergence of the sequences generated by the algorithm \eqref{tdiscgen1} we need  to use \eqref{tf4}, hence we adjust the indexes in algorithm \eqref{tdiscgen1} as follows.

Assume that $0<s<\frac{1}{L}$ and  let $ k_0\in\N$ such that the following assumption holds.
$$(S)\,\,\,\,s\in \left(0,\frac{1}{L+\e_{k_0}}\right]\subseteq \left(0,\frac{1}{L+\e_k}\right],\mbox{ for all }k\ge k_0.$$
Note that such index $k_0$ exists, since $\e_k$ is nonincreasing and $\e_k\to 0$ as $k\to+\infty.$\\
Further, since $\e_k$ is nonincreasing there exists $k_1\ge k_0$ such that $1-s\e_k>0$ for all $k\ge k_1.$

Consider the sequence $(q_k)_{k\ge 0}$ which after an index $k_2\ge k_1$, satisfies the condition (Q), that is
$$(Q)\,\,\,\,(1-s\e_{k+1})^2q_{k+1}^2-(1-s\e_k)^2q_k^2-2sq_{k+1}+s(1-s\e_k)^2q_k\le0,\, q_{k}\ge \frac{2s}{(1-s\e_k)^2}$$ for all $k\ge k_2.$
Note that $q_k\ge 2s>0$ for all $k\ge k_2.$

Let $\ol k=k_2+1$ and observe that $(1-s\e_{k-1})(1-s\e_{k})q_{k-1}q_k>0$ for all $k\ge \ol k$, consequently the sequences $b_k$ and $c_k$ defined at (B) and (C) have the following forms:
$$b_{k-1}=\frac{(q_{k-1}-s)((1-s\e_{k-1})^2q_{k-1}-2s)}{(1-s\e_{k-1})(1-s\e_{k})q_{k-1}q_k},\mbox{ for all } k\ge \ol k$$
and
$$c_{k}=\frac{2s}{(1-s\e_{k-1})(1-s\e_{k})^2q_k}\left(\frac{s}{q_{k-1}}-\frac{s^2\e_k}{q_{k-1}}-s(\e_{k-1}-\e_k)\right),\mbox{ for all } k\ge \ol k.$$

The following general result holds.

\begin{theorem}\label{strongconvergence}  For a sequence $(q_k)$ satisfying (Q) and the stepsize $s$ satisfying (S), consider the sequences $(b_k)_{k\ge 0}$ and $(c_k)_{k\ge 1}$  defined at (B) and (C) and  let $(x_k)_{k\ge 0},\,(y_k)_{k\ge 1}$ be the sequences generated by Algorithm \eqref{tdiscgen}.
Assume that  the sequence $\left(\frac{q_k\e_k}{q_{k-1}\e_{k-1}}\right)_{k\ge\ol k}$ is bounded, the sequence $(q_k^2\e_k)_{k\ge\ol k-1}$ is increasing, further $\lim_{k\to+\infty}q_k^2\e_k=+\infty$ and
$\lim_{k\to+\infty}\frac{ q_k(\e_k-\e_{k+1})}{\e_k} =0$.

Then, $(x_k)$ converges strongly to $x^*$, where $\{x^*\}=\pr_{\argmin f}(0)$ is the minimum norm minimizer of our objective function $f.$ Moreover  $\| x_k - y_k \| \ = \ o\left( \sqrt{\e_k} \right) \mbox{ as } k \to +\infty,$ hence $(y_k)$ also converges strongly to $x^*.$

Further, the following estimates hold.
\[
f_k(x_{k})-f_{k}(\ol x_{k})=o(\e_k)\mbox{ as }k\to+\infty,
\]
\[
f(x_k)-\min f=\mathcal{O}\left(\e_k\right),\mbox{ as }k\to+\infty\mbox{ and }f(y_k)-\min f=\mathcal{O}\left(\e_k\right),\mbox{ as }k\to+\infty,
\]
\[
\| x_k - x_{k-1} \| \ = \ o\left( \sqrt{\e_k} \right) \mbox{ as } k \to +\infty,
\]
and
\[
\| \n f(x_k) \| = \ o\left( \sqrt{\e_k} \right) \mbox{ as } k \to +\infty\mbox{ and }\| \n f(y_k) \| = \ o\left( \sqrt{\e_k} \right) \mbox{ as } k \to +\infty.
\]
\end{theorem}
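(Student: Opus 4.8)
The plan is to construct a discrete Lyapunov (energy) function adapted to the regularized iteration \eqref{tdiscgen1} and to prove that it is nonincreasing for $k\ge\ol k$. The natural candidate, mimicking the continuous Tikhonov analysis, is
\[
\mathcal{E}_k=a_k\bigl(f_k(x_k)-f_k(\ol x_k)\bigr)+\frac{1}{2}\|u_k\|^2,
\]
where $a_k$ is a rescaling built from $q_k$ (heuristically $a_k\sim q_k^2\e_k$, since the ``time'' in the regularized problem is warped by $\e_k$) and $u_k$ is a carefully chosen combination of the iterates, something like $u_k=\lambda_k(x_k-\ol x_k)+\mu_k(x_k-x_{k-1})$ with coefficients expressed through $q_k$ and $1-s\e_k$. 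The whole point of conditions (Q), (B), (C) is precisely to make the cross terms cancel: first I would apply \eqref{tf4} with $y=y_k$ and two choices of $x$ — namely $x=\ol x_k$ and $x=x_k$ — to get two descent inequalities involving $f_k(x_{k+1})$, $\|\n f_k(y_k)\|^2$ and the Tikhonov terms. Combining these with the algebraic identities defining $b_{k-1},c_k$ should yield $\mathcal{E}_{k+1}\le\mathcal{E}_k+(\text{error terms})$, where the error terms are controlled by $\|\ol x_{k+1}-\ol x_k\|$ via \eqref{lfos} and by $(\e_{k-1}-\e_k)$.

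Once the Lyapunov estimate is in hand, the strategy is standard but delicate. From $\mathcal{E}_k$ being (essentially) nonincreasing and bounded, together with $a_k\sim q_k^2\e_k\to+\infty$, I would extract $f_k(x_k)-f_k(\ol x_k)=o(\e_k)$; then \eqref{fontos3} gives $\frac{\e_k}{2}\|x_k-\ol x_k\|^2\le f_k(x_k)-f_k(\ol x_k)=o(\e_k)$, whence $\|x_k-\ol x_k\|\to0$, and combined with $\ol x_k\to x^*$ this yields the strong convergence $x_k\to x^*$. The boundedness of $\|u_k\|^2$ feeds the discrete velocity estimate $\|x_k-x_{k-1}\|=o(\sqrt{\e_k})$ and, through the relation $y_k-x_k=b_{k-1}(x_k-x_{k-1})-c_kx_k$ together with $b_{k-1}\to1$ and $c_k\to0$, the estimate $\|x_k-y_k\|=o(\sqrt{\e_k})$, hence $y_k\to x^*$ as well. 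For the function values, \eqref{fontos5} with $x=x_k$ (resp.\ $x=y_k$) and $y$ a fixed minimizer converts the $f_k$-estimates into $f(x_k)-\min f=\mathcal{O}(\e_k)$ and $f(y_k)-\min f=\mathcal{O}(\e_k)$, using $\|\ol x_k\|\le\|x^*\|$ and the boundedness already established.

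The gradient estimates require the extra term $-\frac{s}{2}\|\n f_k(y_k)-\n f_k(x)\|^2$ that I deliberately retained in \eqref{tf4}. Summing the descent inequalities (or reading off the telescoped remainder from the Lyapunov decrease) produces a summability statement of the form $\sum_k a_k'\|\n f_k(y_k)\|^2<+\infty$ for suitable weights, from which $\|\n f_k(y_k)\|=o(\sqrt{\e_k})$ follows; since $\n f(y_k)=\n f_k(y_k)-\e_k y_k$ and $\|y_k\|$ is bounded (as $y_k\to x^*$), this gives $\|\n f(y_k)\|=o(\sqrt{\e_k})$, and the Lipschitz bound $\|\n f(x_k)-\n f(y_k)\|\le L\|x_k-y_k\|=o(\sqrt{\e_k})$ transfers the estimate to $\|\n f(x_k)\|$.

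The main obstacle, and where essentially all the work lies, is the first step: verifying that with $a_k$, $\lambda_k$, $\mu_k$ chosen as dictated by (B) and (C) the discrete energy is genuinely nonincreasing up to summable errors. Unlike the continuous case there is no exact cancellation — the inequalities \eqref{tf4}, the perturbation $\|\ol x_{k+1}-\ol x_k\|$ from \eqref{lfos}, and the mismatch $(\e_{k-1}-\e_k)$ all contribute cross terms, and condition (Q) must be exactly strong enough to dominate them. Checking this amounts to a careful but routine-in-spirit algebraic manipulation in which the hypotheses $\bigl(\tfrac{q_k\e_k}{q_{k-1}\e_{k-1}}\bigr)$ bounded, $(q_k^2\e_k)$ increasing with $q_k^2\e_k\to+\infty$, and $\tfrac{q_k(\e_k-\e_{k+1})}{\e_k}\to0$ are consumed one by one to guarantee the error terms are summable and the leading quadratic form is negative semidefinite.
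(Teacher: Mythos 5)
Your outline is in the right spirit (apply the modified descent lemma \eqref{tf4} twice, form a weighted combination, build a discrete energy with a function-value gap and a quadratic term, then extract rates), but the step you defer as ``careful but routine-in-spirit algebra'' is where the entire proof lives, and the mechanism you describe for it would not work. The paper's energy inequality \eqref{trightfourth} is \emph{not} ``nonincreasing up to summable errors'': the right-hand side contains $\frac{s}{q_{k-1}}\|x^*\|^2$ and $q_k\e_k\<x^*,x^*-\ol x_{k+1}\>$, whose sums diverge in the relevant regime (e.g.\ $q_k=ak^q$, $q<1$, gives $\sum k^{-q}=+\infty$). What saves the argument is the strict contraction term $\frac{s}{q_{k-1}}E_{k-1}$ on the left, which the paper exploits by multiplying through by $\pi_k=\prod_{i=\ol k}^{k}\bigl(1-\frac{s}{q_{i-1}}\bigr)^{-1}$, telescoping, and running a Ces\`aro--Stolz computation to conclude $E_n=o(q_n^2\e_n)$ --- a statement compatible with $E_n\to+\infty$, i.e.\ strictly weaker than boundedness, yet exactly strong enough. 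Your plan contains no analogue of this step, and ``summable errors plus a nonincreasing energy'' is not a correct description of what must be proved. Relatedly, your extraction is quantitatively off: with your weight $a_k\sim q_k^2\e_k$, boundedness of $\mathcal{E}_k$ gives only $f_k(x_k)-f_k(\ol x_k)=\mathcal{O}\bigl(1/(q_k^2\e_k)\bigr)$, and $1/(q_k^2\e_k)=o(\e_k)$ requires $q_k^2\e_k^2\to+\infty$, which is \emph{not} among the hypotheses (it fails for $q\le p<2q$ in the polynomial case). The paper's weight is $p_k+q_k\sim q_k^2/(2s)$, and the needed rate comes from $E_n=o(q_n^2\e_n)$, not from boundedness.

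Two further points where your route diverges and would need repair. First, you anchor the quadratic term at the moving point $\ol x_k$; the paper anchors at the fixed point $x^*$, defining $\eta_k=\frac{(p_k+q_k)y_k-p_kx_k}{(1-\frac{s}{q_{k-1}})(1-s\e_k)q_k}$ and proving the exact recursion $\eta_{k+1}=\bigl(1-\frac{s}{q_{k-1}}\bigr)\eta_k-\frac{(1-s\e_k)q_k}{2}\n f_k(y_k)$ --- this identity is precisely where conditions (B) and (C) are consumed, and the vanishing quantity fed into Ces\`aro--Stolz is $\<x^*,x^*-\ol x_{k+1}\>\to0$, not a drift controlled by \eqref{lfos}. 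Your moving anchor creates cross terms of size $q_k\|\ol x_{k+1}-\ol x_k\|$ that you would have to control separately. Second, your gradient estimate via summability of $\sum_k a_k'\|\n f_k(y_k)\|^2$ yields only $\|\n f_k(y_k)\|^2=o(1/a_k')$ with $a_k'\sim q_k$ (the natural weight), hence $o(\sqrt{\e_k})$ only when $q_k\e_k$ is bounded below, which again fails for $q<p<2q$. The paper instead reads the gradient off pointwise from the $\eta$-recursion together with $\|\eta_n-x^*\|=o(q_n\sqrt{\e_n})$. Finally, a small but real gap: $c_k\to0$ does not give $\|x_k-y_k\|=o(\sqrt{\e_k})$; one must check $c_k=o(\sqrt{\e_k})$, which the paper does explicitly from formula (C).
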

\begin{proof}
Assume  that $k\ge\ol k$. We take $y=y_k,\,x=x_k$ in \eqref{tf4}  and we get
\begin{equation}\label{trightforp}
f_k(x_{k+1})\le f_k(x_k)+\<\n f_k(y_k),y_k-x_k\>-\frac{s}{2}\|\n f_k(y_k)\|^2-\frac{s}{2}\|\n f_k(y_k)-\n f_k(x_k)\|^2,\,\forall k\ge \ol k.
\end{equation}

Now we take $y=y_k,\,x=x^*$ in \eqref{tf4}  and taking into account that $\n f(x^*)=0$ we get
\begin{equation}\label{trightforq}
f_k(x_{k+1})\le f_k(x^*)+\<\n f_k(y_k),y_k-x^*\>-\frac{s}{2}\|\n f_k(y_k)\|^2-\frac{s}{2}\|\n f_k(y_k)-\e_k x^*\|^2,\,\forall k\ge \ol k.
\end{equation}

Consider the sequence $(p_k)_{k\ge \ol k}$  defined by
 \begin{equation}\label{trightforpq}
 p_k=\frac{(1-s\e_k)^2q_k^2}{2s}-q_k,
\end{equation}
for all $k\ge \ol k.$ Note that due to assumption $(Q)$ one has $p_k\ge 0$ for all $k\ge \ol k.$

 We multiply \eqref{trightforp} with $p_k$ and \eqref{trightforq} with $q_k$ and add to get
\begin{align}\label{trightfirst}
(p_k+q_k)(f_k(x_{k+1})-f(x^*))-&p_k(f_k(x_{k})-f(x^*))\le\\
\nonumber& \left\<\n f_k(y_k),(p_k+q_k)y_k-p_k x_k+(s\e_k-1)q_k x^*-\frac{s}{2}(p_k+q_k)\n f_k(y_k)\right\>\\
\nonumber &-\frac{s}{2}p_k\|\n f_k(y_k)-\n f_k(x_k)\|^2-\frac{s}{2}q_k\|\n f_k(y_k)\|^2-\frac{q_k}{2}\e_k(s\e_k-1)\|x^*\|^2,
\end{align}
for all $k\ge \ol k.$

Now by neglecting the nonpositive terms from the right hand side of \eqref{trightfirst} we obtain

\begin{align}\label{trightfirst1}
(p_k+q_k)(f_k(x_{k+1})-f(x^*))-&p_k(f_k(x_{k})-f(x^*))-q_k\frac{\e_k}{2}\|x^*\|^2\le\\
\nonumber& \left\<\n f_k(y_k),(p_k+q_k)y_k-p_k x_k+(s\e_k-1)q_k x^*-\frac{s}{2}(p_k+q_k)\n f_k(y_k)\right\>,
\end{align}
for all $k\ge \ol k.$

Further, by using the fact that $(\e_k)$ is nonincreasing we have $$f_k(x_{k+1})=f_{k+1}(x_{k+1})+\frac{\e_k-\e_{k+1}}{2}\|x_{k+1}\|^2\ge f_{k+1}(x_{k+1}),$$ consequently it holds
\begin{align}\label{righttra}
&(p_k+q_k)(f_k(x_{k+1})-f(x^*))-p_k(f_k(x_{k})-f(x^*))-q_k\frac{\e_k}{2}\|x^*\|^2=(p_k+q_k)f_k(x_{k+1})-p_k f_k(x_{k})\\
\nonumber&-q_kf_k(x^*)=(p_k+q_k)(f_k(x_{k+1})-f_{k+1}(\ol x_{k+1})) +(p_k+q_k)f_{k+1}(\ol x_{k+1})-p_k(f_k(x_{k})-f_k(\ol x_{k}))\\
\nonumber&-p_k f_k(\ol x_{k})-q_kf_k(x^*)\ge (p_k+q_k)(f_{k+1}(x_{k+1})-f_{k+1}(\ol x_{k+1}))-(p_{k-1}+q_{k-1})(f_k(x_{k})-f_k(\ol x_{k}))\\
\nonumber&+(p_{k-1}+q_{k-1}-p_k)(f_k(x_{k})-f_k(\ol x_{k}))+p_k(f_{k+1}(\ol x_{k+1})-f_{k}(\ol x_{k}))+q_k(f_{k+1}(\ol x_{k+1})-f_k(x^*)).
\end{align}
In one hand, according to \eqref{fontos3} one has $f_{k}(\ox_{k+1})-f_{k}(\ox_{k})\ge\frac{\e_{k}}{2}\|\ox_{k+1}-\ox_k\|^2$ hence
\begin{align}\label{righttrala}
&p_k(f_{k+1}(\ol x_{k+1})-f_{k}(\ol x_{k}))=p_k\left(f_{k}(\ol x_{k+1})-f_{k}(\ol x_{k})+\frac{\e_{k+1}-\e_k}{2}\|\ol x_{k+1}\|^2\right)\\
\nonumber&\ge p_k\frac{\e_{k}}{2}\|\ox_{k+1}-\ox_k\|^2+p_k\frac{\e_{k+1}-\e_k}{2}\|\ol x_{k+1}\|^2\ge p_k\frac{\e_{k+1}-\e_k}{2}\|\ol x_{k+1}\|^2.
\end{align}
On the other hand, by using the gradient inequality we get
\begin{align}\label{righttralala}
&q_k(f_{k+1}(\ol x_{k+1})-f_k(x^*))=q_k\left(f_{k}(\ol x_{k+1})-f_k(x^*)+\frac{\e_{k+1}-\e_k}{2}\|\ol x_{k+1}\|^2\right)\\
\nonumber&\ge q_k\e_k \<x^*,\ol x_{k+1}-x^*\>+q_k\frac{\e_{k+1}-\e_k}{2}\|\ol x_{k+1}\|^2.
\end{align}
 Hence, combining \eqref{trightfirst1}, \eqref{righttra}, \eqref{righttrala} and \eqref{righttralala} we obtain
 \begin{align}\label{trightsecond}
&(p_k+q_k)(f_{k+1}(x_{k+1})-f_{k+1}(\ol x_{k+1}))-(p_{k-1}+q_{k-1})(f_k(x_{k})-f_k(\ol x_{k}))\\
\nonumber&+(p_{k-1}+q_{k-1}-p_k)(f_k(x_{k})-f_k(\ol x_{k}))\le q_k\e_k \<x^*,x^*-\ol x_{k+1}\>+(p_k+q_k)\frac{\e_k-\e_{k+1}}{2}\|\ol x_{k+1}\|^2\\
\nonumber&+\left\<\n f_k(y_k),(p_k+q_k)y_k-p_k x_k+(s\e_k-1)q_k x^*-\frac{s}{2}(p_k+q_k)\n f_k(y_k)\right\>,\mbox{ for all }k\ge\ol k.
\end{align}

Now, according to the form of $p_k$ and condition $(Q)$ one has
$$p_{k-1}+q_{k-1}-p_k=\frac{(1-s\e_{k-1})^2q_{k-1}^2}{2s}-\frac{(1-s\e_k)^2q_k^2}{2s}+q_k\ge s\frac{(1-s\e_{k-1})^2}{2s}q_{k-1}=(p_{k-1}+q_{k-1})\frac{s}{q_{k-1}},$$
for all $k\ge\ol k.$
Consequently, \eqref{trightsecond} leads to
\begin{align}\label{trightthird}
&(p_k+q_k)(f_{k+1}(x_{k+1})-f_{k+1}(\ol x_{k+1}))-(p_{k-1}+q_{k-1})(f_k(x_{k})-f_k(\ol x_{k}))\\
\nonumber&+(p_{k-1}+q_{k-1})\frac{s}{q_{k-1}}(f_k(x_{k})-f_k(\ol x_{k}))\le q_k\e_k \<x^*,x^*-\ol x_{k+1}\>+(p_k+q_k)\frac{\e_k-\e_{k+1}}{2}\|\ol x_{k+1}\|^2\\
\nonumber&+\left\<\n f_k(y_k),(p_k+q_k)y_k-p_k x_k+(s\e_k-1)q_k x^*-\frac{s}{2}(p_k+q_k)\n f_k(y_k)\right\>,\mbox{ for all }k\ge\ol k.
\end{align}

For all $k\ge \ol k$, consider now the sequence
$$\eta_k=\frac{(p_k+q_k)y_k-p_k x_k}{(1-\frac{s}{q_{k-1}})(1-s\e_k)q_k}=\frac{(1-s\e_k)q_k}{2s(1-\frac{s}{q_{k-1}})}y_k-\left(\frac{(1-s\e_k)q_k}{2s(1-\frac{s}{q_{k-1}})}-\frac{1}{(1-\frac{s}{q_{k-1}})(1-s\e_k)}\right)x_k.$$
Recall that  $y_k=x_k+b_{k-1}(x_k-x_{k-1})-c_kx_k$,
hence, by using Algorithm \eqref{tdiscgen1} one has
\begin{align}\label{cckek}
\nonumber\eta_{k+1}&=\frac{(1-s\e_{k+1})q_{k+1}((1+b_k-c_{k+1})x_{k+1}-b_k x_k)}{2s(1-\frac{s}{q_{k}})}-\left(\frac{(1-s\e_{k+1})q_{k+1}}{2s(1-\frac{s}{q_{k}})}-\frac{1}{(1-\frac{s}{q_{k}})(1-s\e_{k+1})}\right)x_{k+1}\\
\nonumber&=\left(\frac{(1-s\e_{k+1})q_{k+1}(b_k-c_{k+1})}{2s(1-\frac{s}{q_{k}})}+\frac{1}{(1-\frac{s}{q_{k}})(1-s\e_{k+1})}\right)y_k-\frac{(1-s\e_{k+1})q_{k+1}b_k}
{2s(1-\frac{s}{q_{k}})}x_k\\
\nonumber&-\left(\frac{(1-s\e_{k+1})q_{k+1}(b_k-c_{k+1})}{2s(1-\frac{s}{q_{k}})}+\frac{1}{(1-\frac{s}{q_{k}})(1-s\e_{k+1})}\right)s\n f_k(y_k)\\
&=\left(1-\frac{s}{q_{k-1}}\right)\eta_k-\frac{(1-s\e_k)q_k}{2}\n f_k(y_k),\mbox{ for all }k\ge \ol k.
\end{align}

In what follows we show that
\begin{align}\label{righttelso}
&\left\<\n f_k(y_k),(p_k+q_k)y_k-p_k x_k+(s\e_k-1)q_k x^*-\frac{s}{2}(p_k+q_k)\n f_k(y_k)\right\>\le\\
\nonumber&\left(1-\frac{s}{q_{k-1}}\right)\|\eta_k-x^*\|^2-\|\eta_{k+1}-x^*\|^2+\frac{s}{q_{k-1}}\|x^*\|^2,\mbox{ for all }k\ge\ol k.
\end{align}

Indeed, by using \eqref{cckek} we get
\begin{align*}
&\left(1-\frac{s}{q_{k-1}}\right)\|\eta_k-x^*\|^2-\|\eta_{k+1}-x^*\|^2+\frac{s}{q_{k-1}}\|x^*\|^2\\
&=\left(1-\frac{s}{q_{k-1}}\right)\|\eta_k\|^2-\|\eta_{k+1}\|^2-2\left\<\left(1-\frac{s}{q_{k-1}}\right)\eta_k-\eta_{k+1},x^*\right\>\\
&=\left(1-\frac{s}{q_{k-1}}\right)\frac{s}{q_{k-1}}\|\eta_k\|^2+\left\<\left(1-\frac{s}{q_{k-1}}\right)\eta_k,(1-s\e_k)q_k\n f_k(y_k)\right\>-\frac{(1-s\e_k)^2 q_k^2}{4}\|\n f_k(y_k)\|^2\\
&-\<(1-s\e_k)q_k \n f_k(y_k),x^*\>=\left\<\n f_k(y_k),(p_k+q_k)y_k-p_k x_k+(s\e_k-1)q_k x^*-\frac{s}{2}(p_k+q_k)\n f_k(y_k)\right\>\\
&+\left(1-\frac{s}{q_{k-1}}\right)\frac{s}{q_{k-1}}\|\eta_k\|^2.
\end{align*}

Consequently, by denoting $E_{k}=(p_k+q_k)(f_{k+1}(x_{k+1})-f_{k+1}(\ol x_{k+1}))+\|\eta_{k+1}-x^*\|^2$, \eqref{trightthird} and \eqref{righttelso} lead to
\begin{align}\label{trightfourth}
&E_k-E_{k-1}+\frac{s}{q_{k-1}}E_{k-1}\le \frac{s}{q_{k-1}}\|x^*\|^2+q_k\e_k \<x^*,x^*-\ol x_{k+1}\>+(p_k+q_k)\frac{\e_k-\e_{k+1}}{2}\|\ol x_{k+1}\|^2,
\end{align}
 for all $k\ge\ol k.$

Consider now the sequence  $\pi_k=\frac{1}{\prod_{i=\ol k}^k \left(1-\frac{s}{q_{i-1}}\right)}.$ Note that $(\pi_k)_{k\ge\ol k}$ is well defined, positive and increasing since by the hypotheses we have $q_{k-1}\ge 2s$ for all $k\ge \ol k.$   Further, since $\frac{1}{1-\frac{s}{q_{i-1}}}\le 2$ we have that $\pi_k\le 2^{k-\ol k+1}.$

Now, by multiplying \eqref{trightfourth} with $\pi_k$ we obtain
 \begin{align}\label{trightfifth}
\pi_k E_k-\pi_{k-1}E_{k-1}&\le \frac{s}{q_{k-1}}\pi_k\|x^*\|^2+q_k\e_k\pi_k \<x^*,x^*-\ol x_{k+1}\>+(p_k+q_k)\frac{\e_k-\e_{k+1}}{2}\pi_k\|\ol x_{k+1}\|^2\\
\nonumber&=(\pi_k-\pi_{k-1})\|x^*\|^2+q_k\e_k\pi_k \<x^*,x^*-\ol x_{k+1}\>+(p_k+q_k)\frac{\e_k-\e_{k+1}}{2}\pi_k\|\ol x_{k+1}\|^2,
\end{align}
for all $k>\ol k.$

By summing up \eqref{trightfifth} from $k=\ol k+1$ to $k=n>\ol k+1$ we obtain
 \begin{align}\label{trightsixth}
\pi_n E_n&\le\pi_n\|x^*\|^2+\sum_{k=\ol k+1}^n q_k\e_k\pi_k \<x^*,x^*-\ol x_{k+1}\>+\sum_{k=\ol k+1}^n(p_k+q_k)\frac{\e_k-\e_{k+1}}{2}\pi_k\|\ol x_{k+1}\|^2+C,
\end{align}
for some $C>0.$

 Next we show that
$$\frac{\pi_n\|x^*\|^2+\sum_{k=\ol k+1}^n q_k\e_k\pi_k \<x^*,x^*-\ol x_{k+1}\>+\sum_{k=\ol k+1}^n(p_k+q_k)\frac{\e_k-\e_{k+1}}{2}\pi_k\|\ol x_{k+1}\|^2+C}{\pi_n}=o(q_n^2\e_n)\mbox{ as }n\to+\infty.$$

Indeed, according to the hypotheses $q_n^2\e_n\to+\infty$ as $n\to+\infty$ and we know that $(\pi_n)$ is increasing, hence $\frac{\pi_n\|x^*\|+C}{\pi_n}=o(q_n^2\e_n)\mbox{ as }n\to+\infty.$

Further, since  $\left(\frac{q_k\e_k}{q_{k-1}\e_{k-1}}\right)$ is bounded, $(q_k^2\e_k\pi_k)$ is increasing  and $\lim_{n\to+\infty}q_n^2\e_n\pi_n=+\infty$,  by using the fact that $\lim_{n\to+\infty}\<x^*,x^*-\ol x_{n+1}\>=0$, via the Ces\`aro-Stolz theorem  we get
\begin{align*}\lim_{n\to+\infty}\frac{\sum_{k=\ol k+1}^n q_k\e_k\pi_k \<x^*,x^*-\ol x_{k+1}\>}{q_n^2\e_n\pi_n}&=\lim_{n\to+\infty}\frac{q_n\e_n\pi_n \<x^*,x^*-\ol x_{n+1}\>}{q_n^2\e_n\pi_n-q_{n-1}^2\e_{n-1}\pi_{n-1}}=\lim_{n\to+\infty}\frac{ \<x^*,x^*-\ol x_{n+1}\>}{q_n-\frac{q_{n-1}^2\e_{n-1}\pi_{n-1}}{q_n\e_n\pi_n}}\\
&=\lim_{n\to+\infty}\frac{ \<x^*,x^*-\ol x_{n+1}\>}{\frac{q_n^2\e_n-q_{n-1}^2\e_{n-1}}{q_n\e_n}+s\frac{q_{n-1}\e_{n-1}}{q_n\e_n}}\le \lim_{n\to+\infty}\frac{ \<x^*,x^*-\ol x_{n+1}\>}{s\frac{q_{n-1}\e_{n-1}}{q_n\e_n}} =0.
\end{align*}

Finally, according to the hypotheses $\lim_{n\to+\infty}\frac{ q_n(\e_n-\e_{n+1})}{\e_n} =0$, hence for some $M>0$ one has
\begin{align*}\lim_{n\to+\infty}\frac{\sum_{k=\ol k+1}^n(p_k+q_k)\frac{\e_k-\e_{k+1}}{2}\pi_k\|\ol x_{k+1}\|^2}{q_n^2\e_n\pi_n}&=
\frac{1}{4s}\lim_{n\to+\infty}\frac{(1-s\e_n)^2q_n^2(\e_n-\e_{n+1})\pi_n\|\ol x_{n+1}\|^2}{q_n^2\e_n\pi_n-q_{n-1}^2\e_{n-1}\pi_{n-1}}=\\
&=\frac{1}{4s}\lim_{n\to+\infty}\frac{ (1-s\e_n)^2(\e_n-\e_{n+1})\|\ol x_{n+1}\|^2}{\e_n-\frac{q_{n-1}^2\e_{n-1}\pi_{n-1}}{q_n^2\pi_n}}\\
&=\frac{1}{4s}\lim_{n\to+\infty}\frac{(1-s\e_n)^2(\e_n-\e_{n+1})\|\ol x_{n+1}\|^2}{\frac{q_n^2\e_n-q_{n-1}^2\e_{n-1}}{q_n^2}+s\frac{q_{n-1}\e_{n-1}}{q_n^2}}\\
&\le M\lim_{n\to+\infty}\frac{ q_n(\e_n-\e_{n+1})}{\e_n} =0.
\end{align*}
Consequently, from \eqref{trightsixth} we get
$E_n=o(q_n^2\e_n)\mbox{ as }n\to+\infty$,
which, taking into account the form of $E_n$, leads to
$(p_n+q_n)(f_{n+1}(x_{n+1})-f_{n+1}(\ol x_{n+1}))=o(q_n^2\e_n)\mbox{ as }n\to+\infty$. In other words,
$f_{n}(x_{n})-f_{n}(\ol x_{n})=o(\e_n)\mbox{ as }n\to+\infty$ and by using \eqref{fontos5} we get
$$f(x_n)-\min f=\mathcal{O}(\e_n)\mbox{ as }n\to+\infty.$$

In order to show strong convergence, we use \eqref{fontos3} and we get
\begin{align*}\lim_{n\to+\infty}\|x_n-x^*\|^2&\le 2\lim_{n\to+\infty}\left(\|x_n-\ol x_n\|^2+\|\ol x_n-x^*\|^2\right)\\
&\le4 \lim_{n\to+\infty}\frac{f_n(x_n)-f_n(\ol x_n)}{\e_n}+2\lim_{n\to+\infty}\|\ol x_n-x^*\|^2=0.
\end{align*}

Concerning the rates of convergence for the discrete velocity $\| x_n - x_{n-1} \|$ we conclude the following. From the definition of $E_n$ and the fact that $E_n \ = \ o\left( q_n^2 \e_n \right) \mbox{ as } n \to +\infty$
we have that
$$\| \eta_n - x^* \| \ = \ o\left( q_n \sqrt{\e_n} \right) \mbox{ as } n \to +\infty.$$
Now, using the definition of $\eta_n$ and the fact that $y_n = x_n+b_{n-1}(x_n-x_{n-1})-c_nx_n$ we derive
\begin{align*}
    \eta_n -x^*\ &= \frac{(1-s\e_n)q_n}{2s(1-\frac{s}{q_{n-1}})}y_n-\left(\frac{(1-s\e_n)q_n}{2s(1-\frac{s}{q_{n-1}})}-\frac{1}{(1-\frac{s}{q_{n-1}})(1-s\e_n)}\right)x_n -x^*\\
    &= \frac{(1-s\e_n)q_n}{2s(1-\frac{s}{q_{n-1}})}b_{n-1} (x_n - x_{n-1}) + \left(\frac{1}{(1-\frac{s}{q_{n-1}})(1-s\e_n)}-\frac{(1-s\e_n)q_n}{2s(1-\frac{s}{q_{n-1}})}c_n\right)x_n-x^*
\end{align*}
Now, since $(q_n^2\e_n)$ is increasing, and $(\e_n)$ is non-increasing we deduce that $(q_n)$ is increasing. Further, since $\lim_{n\to+\infty} q_n^2\e_n=+\infty$ and $\lim_{n\to+\infty} \e_n=0$ we obtain that $\lim_{n\to+\infty} q_n=+\infty.$ Consequently, for $n$ big enough one has
$$b_{n-1}=\frac{(q_{n-1}-s)((1-s\e_{n-1})^2q_{n-1}-2s)}{(1-s\e_{n-1})(1-s\e_{n})q_{n-1}q_n}\le \frac{(1-s\e_{n-1})q_{n-1}}{(1-s\e_{n})q_n}<1$$
and
$$\lim_{n\to+\infty}\frac{c_{n}}{\sqrt{\e_n}}=\lim_{n\to+\infty}\frac{2s}{q_n\sqrt{\e_n}}\frac{1}{(1-s\e_{n-1})(1-s\e_{n})^2}\left(\frac{s}{q_{n-1}}-\frac{s^2\e_n}{q_{n-1}}-s(\e_{n-1}-\e_n)\right)=0.$$

Consequently, by using the fact that $(x_n)$ is bounded we have
$$\lim_{n\to+\infty}\frac{ \left(\frac{1}{(1-\frac{s}{q_{n-1}})(1-s\e_n)}-\frac{(1-s\e_n)q_n}{2s(1-\frac{s}{q_{n-1}})}c_n\right)x_n-x^*}{q_n\sqrt{\e_n}}=0,$$
which combined with the fact that $\lim_{n\to+\infty}\frac{\eta_n-x^*}{q_n\sqrt{\e_n}}=0$ yields
$$\lim_{n\to+\infty}\frac{(1-s\e_n)}{2s(1-\frac{s}{q_{n-1}})}b_{n-1} \frac{x_n - x_{n-1}}{\sqrt{\e_n}}=0.$$
But $(b_n)$ is bounded and according to our hypothese cannot go to 0 as $n\to+\infty$, hence $\|x_n-x_{n-1}\|=o(\sqrt{\e_n})$ as $n\to+\infty.$

From here we deduce at once that $\|y_n-x_n\|=o(\sqrt{\e_n})$ as $n\to+\infty,$ hence in particular
$$\lim_{n\to+\infty}y_n=x^*.$$

From \eqref{cckek} we have
$\left(1-\frac{s}{q_{n-1}}\right)\eta_n-\eta_{n+1}=\frac{(1-s\e_n)q_n}{2}\n f_n(y_n)$ and using the fact that  $\| \eta_n\| \ = \ o\left( q_n \sqrt{\e_n} \right) \mbox{ as } n \to +\infty$ we obtain that
$\|\n f_n(y_n)\|=o(\sqrt{\e_n})$ as $n\to+\infty,$ and further that $\|\n f(y_n)\|=o(\sqrt{\e_n})$ as $n\to+\infty.$ By using the $L-$Lipschitz continuity of $\n f$ we get $\|\n f(x_n)-\n f(y_n)\|\le L\|y_n-x_n\|$ which combined with the facts that $\|y_n-x_n\|=o(\sqrt{\e_n})$ as $n\to+\infty$ and $\|\n f(y_n)\|=o(\sqrt{\e_n})$ as $n\to+\infty$ lead to $\|\n f(x_n)\|=o(\sqrt{\e_n})$ as $n\to+\infty$.

Finally, by using Lemma \ref{descent} we get
$$f(y_n)-\min f\le f(x_n)-\min f+\|\n f(x_n)\|\|y_n-x_n\|+\frac{L}{2}\|y_n-x_n\|^2$$
hence, from the facts that $f(x_n)-\min f=\mathcal{O}(\e_n)$, $\|y_n-x_n\|=o(\sqrt{\e_n})$ and $\|\n f(x_n)\|=o(\sqrt{\e_n})$ s $n\to+\infty$ we obtain that $$f(y_n)-\min f=\mathcal{O}(\e_n)\mbox{ as }n\to+\infty.$$
\end{proof}

\section{Particular choice of the parameter sequences $(q_k)$ and $(\e_k)$}

Let us consider a specific choice of the sequences $(q_k)$ and $(\e_k)$ being polynomial type, namely, $q_k = a k^q$, $\e_k = \frac{c}{k^p}$, where $1 \geq q > 0$, $p > 0$ and $a$ and $c$ are positive real numbers.
Let us fix $0<s<\frac{1}{L}.$ Then from condition (S) we have $s\le\frac{1}{L+\e_{k_0}}$ for some $k_0\in \N$, and it is an easy computation that
$$k_0=\mbox{int}\left(\frac{cs}{1-Ls}\right)^{\frac{1}{p}}+1,$$
where $\mbox{int}(x)$ denotes the integer part of $x.$

Now we  compute the index $k_1\ge k_0$  such that $1-s\e_k>0$ for all $k\ge k_1.$ Note that $1-s\e_k>0$ whenever $k\ge \mbox{int}(cs)^{\frac{1}{p}}+1$, consequently one can take $k_1=k_0.$

Condition $(Q)$ in this case becomes: after an index $k_2\ge k_1$ it holds that
$$\left( 1 - \frac{sc}{(k+1)^p} \right)^2 a^2 (k+1)^{2q} - \left( 1 - \frac{sc}{k^p} \right)^2 a^2 k^{2q} - 2 s a (k+1)^q + s \left( 1 - \frac{s c}{k^p} \right)^2 a k^q \ \leq \ 0$$
and
$$a k^q \ \geq \ \frac{2s}{(1 - \frac{s c}{k^p})^2}$$
for all $k \geq k_2$. Note that the second condition if always fulfilled starting from $k$ large enough due to $q$ and $p$ being positive.

Now consider the case $q < 1$. Since
$$\left( 1 - \frac{sc}{(k+1)^p} \right)^2 a^2 (k+1)^{2q} - \left( 1 - \frac{sc}{k^p} \right)^2 a^2 k^{2q}=\mathcal{O}(k^{2q-1})\mbox{ as }k\to+\infty$$
and
$$- 2 s a (k+1)^q + s \left( 1 - \frac{s c}{k^p} \right)^2 a k^q =-sa k^q+\mathcal{O}(k^{q-1})+\mathcal{O}(k^{q-p})\mbox{ as }k\to+\infty$$
we obtain that
\begin{align*}
&\left( 1 - \frac{sc}{(k+1)^p} \right)^2 a^2 (k+1)^{2q} - \left( 1 - \frac{sc}{k^p} \right)^2 a^2 k^{2q} - 2 s a (k+1)^q + s \left( 1 - \frac{s c}{k^p} \right)^2 a k^q =-sa k^q\\
&+\mathcal{O}(k^{2q-1})+\mathcal{O}(k^{q-1})+\mathcal{O}(k^{q-p})\mbox{ as }k\to+\infty,
\end{align*}
hence there exists an index $k_2\ge k_1$ such that (Q) holds.

 Now, if $q = 1$ we obtain
 \begin{align*}
 &\left( 1 - \frac{sc}{(k+1)^p} \right)^2 a^2 (k+1)^{2} - \left( 1 - \frac{sc}{k^p} \right)^2 a^2 k^{2} - 2 s a (k+1) + s \left( 1 - \frac{s c}{k^p} \right)^2 a k=(2a^2-a s)k\\
 & +\mathcal{O}(k^{1-p})+\mathcal{O}(1)\mbox{ as }k\to+\infty,
 \end{align*}
 hence (Q) holds provided $2a^2-a s<0$, that is $a<\frac{s}{2}.$

Concerning the sequence $(b_k)_{k\ge 0}$ in this particular case condition (B) becomes:
$$(Bp)\,\,\,\left\{\begin{array}{lll}
  b_{k-1}=0, \mbox{ if } k\in\left\{1,(cs)^{\frac1p},(cs)^{\frac1p}+1\right\}\\
  \\
 \ds b_{k-1}=\frac{k^p(a(k-1)^q-s)(a((k-1)^p-cs)^2(k-1)^q-2s(k-1)^{2p})}{a^2(k-1)^{q+p}k^{q}((k-1)^p-cs)(k^p-cs)},\mbox{ otherwise}.
\end{array}\right.$$
Note that $b_k\to 1$ as $k\to+\infty.$

Further, condition (C) becomes:
 $$(Cp)\,\,\,\left\{\begin{array}{lll}
  c_{k}=0, \mbox{ if } k\in\left\{1,(cs)^{\frac1p},(cs)^{\frac1p}+1\right\}\\
  \\
 \ds c_{k}=\frac{2s^2k^p((k-1)^pk^p-c(k-1)^p-ac(k-1)^qk^p+ac(k-1)^{q+p})}{a^2 (k-1)^qk^q((k-1)^p-cs)(k^p-cs)^2},\mbox{ otherwise}.
\end{array}\right.$$

Note that $c_k>0$ for $k$ big enough, further $(c_k)$ is  nonincreasing and $c_k\to0$ as $k\to+\infty.$ Hence, indeed in this case the term $c_k x_k$ in Algorithm \eqref{tdiscgen} plays the role of a Tikhonov regularization term. More precisely, Algorithm \eqref{tdiscgen} reads as: $x_0,x_{1}\in\mathcal{H}$ and for all $k\ge 1$
\begin{equation}\label{tdiscgenp}
\left\{\begin{array}{llll}
\ds y_k=x_k, \mbox{ if } k\in\left\{1,(cs)^{\frac1p},(cs)^{\frac1p}+1\right\}\\
\ds y_k= x_k+\frac{k^p(a(k-1)^q-s)(a((k-1)^p-cs)^2(k-1)^q-2s(k-1)^{2p})}{a^2(k-1)^{q+p}k^{q}((k-1)^p-cs)(k^p-cs)}(x_k-x_{k-1})\\
\ds\,\,\,\,\,\,\,\,\,\,-\frac{2s^2k^p((k-1)^pk^p-c(k-1)^p-ac(k-1)^qk^p+ac(k-1)^{q+p})}{a^2 (k-1)^qk^q((k-1)^p-cs)(k^p-cs)^2} x_k,\mbox{ otherwise}
\\
x_{k+1}=y_k-s\n f(y_k)-\frac{cs}{k^p} y_k.
\end{array}\right.
\end{equation}

Note that from a numerical point of view Algorithm \eqref{tdiscgenp} can easily be implemented. In this particular case, we have the following result.
\begin{theorem}\label{strongconvergencep}
Let  $0<q < 1$ and $0 < p < 2q$ and for a fixed the stepsize $s\in\left(0,\frac{1}{L}\right)$
 consider the sequences  $(x_k)_{k\in\N},\,(y_k)_{k\in\N}$  generated by Algorithm \eqref{tdiscgenp}. Then, $(x_k)$ and $(y_k)$ converge strongly to $x^*$, where $\{x^*\}=\pr_{\argmin f}(0)$ is the minimum norm minimizer of our objective function $f.$

Further,
$$f_k(x_{k})-f_{k}(\ol x_{k})=o(k^{-p})\mbox{ as }k\to+\infty,$$
$$f(x_k)-\min f=\mathcal{O}\left(k^{-p}\right),\mbox{ as }k\to+\infty\mbox{ and } f(y_k)-\min f=\mathcal{O}\left(k^{-p}\right),\mbox{ as }k\to+\infty,$$
$$\| x_k - x_{k-1} \| \ = \ o\left( k^{-\frac{p}{2}} \right) \mbox{ as } k \to +\infty$$
and
$$\| \n f(x_k)\| \ = \ o\left( k^{-\frac{p}{2}} \right) \mbox{ as } k \to +\infty\mbox{ and }\| \n f(y_k)\| \ = \ o\left( k^{-\frac{p}{2}} \right) \mbox{ as } k \to +\infty$$

\end{theorem}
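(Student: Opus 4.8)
The plan is to deduce this result directly from the general Theorem \ref{strongconvergence}: all that is needed is to check that the polynomial choices $q_k=ak^q$ and $\e_k=ck^{-p}$, with $0<q<1$ and $0<p<2q$, fulfil every hypothesis of that theorem. Once this verification is complete, each conclusion of Theorem \ref{strongconvergence} transfers verbatim, and substituting $\e_k=ck^{-p}$ converts the rates expressed through $\e_k$ into the claimed powers of $k$. Indeed, since $\e_k=ck^{-p}$ we have $o(\e_k)=o(k^{-p})$, $\mathcal{O}(\e_k)=\mathcal{O}(k^{-p})$ and $o(\sqrt{\e_k})=o(k^{-p/2})$, which is exactly the stated list of estimates.

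Thus the entire proof reduces to checking the six hypotheses of Theorem \ref{strongconvergence}. Conditions $(S)$ and $(Q)$ have already been settled in the discussion preceding this theorem: the index $k_0$ realizing $(S)$ was computed explicitly, and the asymptotic expansion carried out above shows that for $0<q<1$ the left-hand side of $(Q)$ behaves like $-sak^q+\mathcal{O}(k^{2q-1})+\mathcal{O}(k^{q-1})+\mathcal{O}(k^{q-p})$, which is eventually negative, while $q_k=ak^q\ge 2s/(1-s\e_k)^2$ holds for $k$ large; hence $(Q)$ is valid from some index $k_2$ on. For the boundedness hypothesis I would simply compute
\[
\frac{q_k\e_k}{q_{k-1}\e_{k-1}}=\frac{k^{q-p}}{(k-1)^{q-p}}=\left(\frac{k}{k-1}\right)^{q-p}\to 1\mbox{ as }k\to+\infty,
\]
so the sequence converges and is in particular bounded.

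The three remaining growth conditions all concern $q_k^2\e_k=a^2c\,k^{2q-p}$. Because $p<2q$ the exponent $2q-p$ is strictly positive, so $(q_k^2\e_k)$ is strictly increasing and $\lim_{k\to+\infty}q_k^2\e_k=+\infty$. The only genuinely delicate point, and precisely the place where the restriction $q<1$ is indispensable, is the final hypothesis $\lim_{k\to+\infty}q_k(\e_k-\e_{k+1})/\e_k=0$. Here I would invoke the mean value theorem (or a first-order Taylor expansion) to obtain $\e_k-\e_{k+1}=c\big(k^{-p}-(k+1)^{-p}\big)=\mathcal{O}(k^{-p-1})$, whence
\[
\frac{q_k(\e_k-\e_{k+1})}{\e_k}=\frac{ak^q\cdot\mathcal{O}(k^{-p-1})}{c\,k^{-p}}=\mathcal{O}(k^{q-1})\to 0,
\]
using $q<1$. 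With all hypotheses verified, Theorem \ref{strongconvergence} applies and yields every assertion.

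The main (albeit modest) obstacle is thus this last estimate: it is exactly the requirement $q<1$ that forces the exponent $q-1$ to be negative, which is why the boundary case $q=1$ — for which the same quotient is merely $\mathcal{O}(1)$ — is excluded from the present statement, even though $(Q)$ can still be arranged there under the extra condition $a<\frac{s}{2}$ established in the preceding analysis.
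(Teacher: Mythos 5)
Your proposal is correct and follows essentially the same route as the paper: the proof there likewise reduces everything to verifying the four growth hypotheses of Theorem \ref{strongconvergence} for $q_k=ak^q$, $\e_k=ck^{-p}$ (with $(S)$ and $(Q)$ already settled in the preceding discussion), and the decisive last limit $\lim_{k\to+\infty}q_k(\e_k-\e_{k+1})/\e_k=0$ is obtained by the same $\mathcal{O}(k^{q-1})$ estimate exploiting $q<1$. No gaps.
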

\begin{proof} We only need to show that the following conditions from the hypotheses of Theorem \ref{strongconvergence} hold:
\begin{itemize}
    \item the sequence $\frac{q_k\e_k}{q_{k-1}\e_{k-1}}=\frac{k^{q-p}}{(k-1)^{q-p}}$ is bounded if we take the starting index big enough;

    \item the sequence $q_k^2\e_k=a^2 c k^{2q-p}$ is increasing after a starting index big enough;

    \item $\lim_{k\to+\infty}q_k^2\e_k=\lim_{k \to +\infty} a^2 c k^{2q-p} \ = \ +\infty$;

    \item $\lim_{k\to+\infty}\frac{ q_k(\e_k-\e_{k+1})}{\e_k}=\lim_{k \to +\infty} \frac{a k^q \left( \frac{1}{k^p} - \frac{1}{(k+1)^p} \right)}{\frac{1}{k^p}} \ = \ 0$.
\end{itemize}

First of all, the sequence $\frac{k^{q-p}}{(k-1)^{q-p}}$ is indeed bounded if we take the starting index big enough since $\lim_{k \to +\infty} \frac{k^{q-p}}{(k-1)^{q-p}} = 1$. Secondly, the sequence $a^2 c k^{2q-p}$ is increasing when $2q > p$ and $\lim_{k \to +\infty} a^2 c k^{2q-p} \ = \ +\infty$ when $2q > p$. Finally,
$$\lim_{k\to+\infty}\frac{a k^q \left( \frac{1}{k^p} - \frac{1}{(k+1)^p} \right)}{\frac{1}{k^p}} \  = \lim_{k\to+\infty}\frac{ak^q}{k+1} \frac{(k+1)^p - k^p}{(k+1)^{p-1}} =0,$$
since $q<1.$
\end{proof}

\begin{remark} We emphasize that Algorithm \eqref{tdiscgenp} can be seen as a Nesterov type algorithm with two Tikhonov regularization terms.
Indeed, the extrapolation parameter $(b_k)$ goes to 1 as $k\to+\infty$ such as in the Nesterov algorithm. Further, the terms $c_kx_k$ and $\e_k y_k$ can be thought as Tikhonov regularization terms since both $c_k$ and $\e_k$ are nonnegative and nonincreasing sequences (after $k$ big enough), and goes to 0 as $k\to+\infty.$ Unfortunately we could not allow the case $q=1$ and $p=2$ in our algorithm. Nevertheless, if $p$ is close to 2, (and $q$ is close to 1), then from a numerical perspective the convergence rates obtained for the potential energy $f(x_k)-\min f$ and discrete velocity $\|x_k-x_{k-1}\|$ are as good as the rates obtained for the famous Nesterov algorithm, see \cite{AP}. Moreover, our algorithm assures the strong convergence of the generated sequences to the minimum norm minimizer a feature that makes it unique in the literature.
\end{remark}

%\begin{comment}
\section{Numerical experiments}
In this section we consider some numerical experiments in order to sustain the theoretical results obtained in Theorem \ref{strongconvergencep}. To this purpose, let us consider the  objective function
\[
f: \mathbb{R}^2 \mapsto \mathbb{R}, \ f(x, y) = (ax + by)^2,
\]
where $a, b \in \mathbb{R} \setminus \{0\}$. Then obviously $f$ is smooth and convex and its gradient is Lipschitz continuous, having  Lipschitz constant $L=2\sqrt{2}\sqrt{(a^2+b^2)\max(a^2,b^2)}.$
Observe that the minimal value of $f$ is 0 and the set $\argmin f$ is $\left\{\left(x, -\frac{a}{b}x \right):x\in\R\right\}$, further clearly $(0, 0)$ is the minimizer of  minimal norm. For simplicity in the following experiments concerning Algorithm \eqref{tdiscgenp} we take everywhere $q_k = k^\frac{4}{5}$ and $s = 0.1$, (which always will satisfy $s<\frac{1}{L}$), and fix the starting points $x_0=(1, -1)$ and $x_1=(-1, 1).$

In our first experiment we fix $a = 0.1$ and $b = 100$ and in Algorithm \eqref{tdiscgenp} we set  $\e_k = \frac{1}{k^p}$, where $p\in\{0.3,0.6,0.9,1.2,1.5\}$. Further, we consider the case when there is no Tikhonov regularization, that is the case of a Nesterov type algorithm by taking $\e_k\equiv 0$ and $c_k\equiv 0$. In order to show the theoretical rates obtained in Theorem \ref{strongconvergencep} for the discrete velocity $\|x_k-x_{k-1}\|$ and the potential energy $f(x_k)-\min f$  we also represent the values $1/k$ and $f(x_1)/k^2$, respectively.

So we run Algorithm \eqref{tdiscgenp} for 20 iterations, the results are shown in Figure 1.

\begin{figure}[hbt!]
    \includegraphics[width=\textwidth]{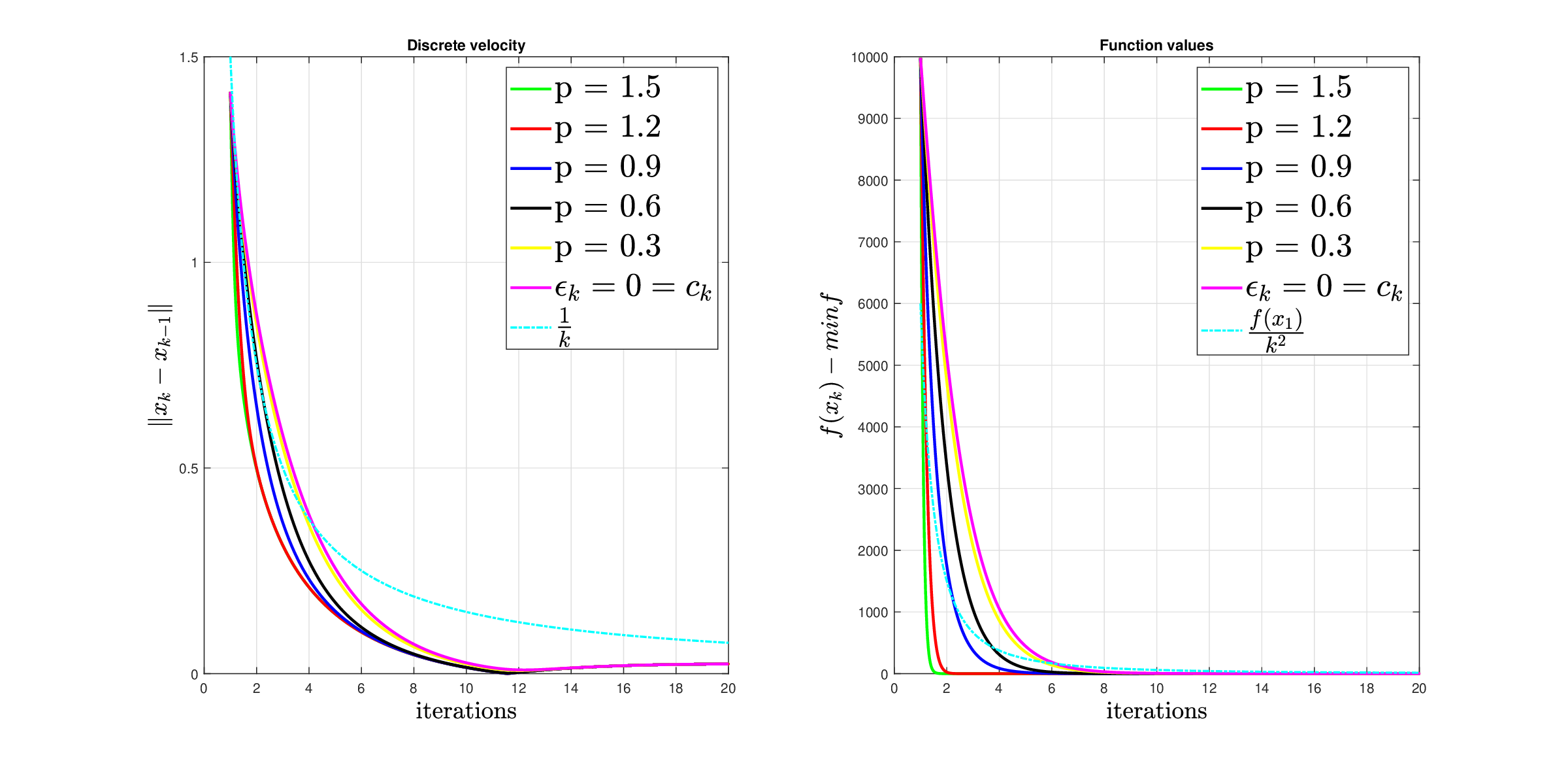}
    \caption{Different choices of $\e_k$}
\end{figure}

Observe that indeed, our algorithm has a similar (even better) behavior as the Nesterov type algorithm, the convergence rates for the discrete velocity and potential energy are of order $o(1/k)$ and $O(1/k^2)$, respectively. Further, the Tikhonov regularization does not affect the optimal rates, even more, while we increase $p$ these rates become better.

In our second experiment for $a=1,\,b=5$ we show the influence of the Tikhonov regularization terms $\e_ky_k$ and $c_kx_k$ on the behaviour of the iterates of the algorithm. In the next figures we represent the first component of the iterates $x_k$ with red meanwhile the second component will be represented with blue.

First,  we analyze what happens if we renounce to both Tikhonov regularization terms. So let us put both $c_k \equiv 0$ and $\e_k \equiv 0$  in Algorithm \eqref{tdiscgenp}. According to Figure 2 in this case there is  no convergence to the minimal norm element.

\begin{figure}[hbt!]
   \begin{center} \includegraphics[width=0.5\textwidth]{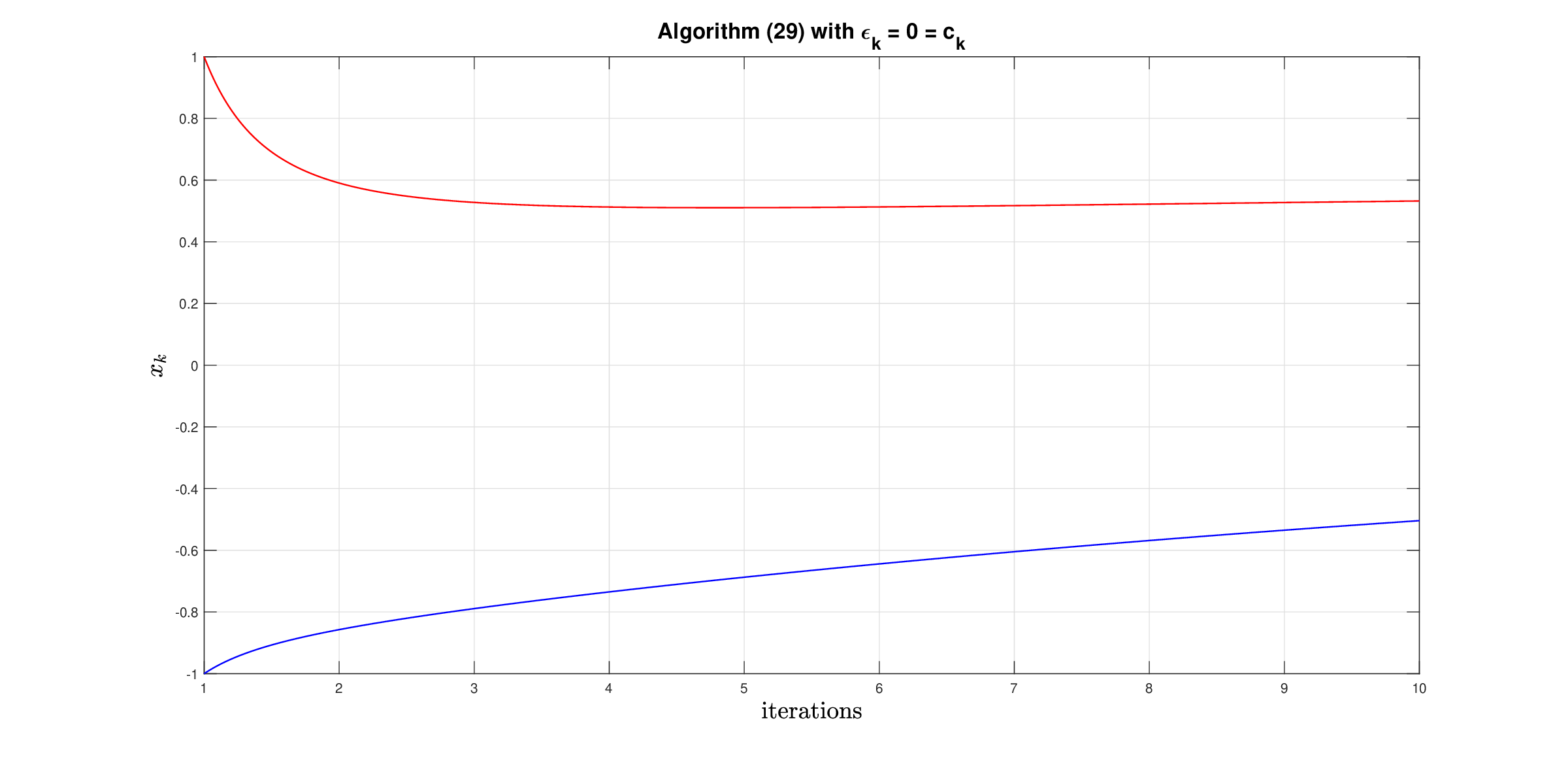}
    \caption{$\e_ky_k \equiv 0,\,c_kx_k\equiv 0$}
    \end{center}
\end{figure}

Next we show that in order to have convergence to the minimal norm element the presence of both Tikhonov regularization terms are essential. To this purpose, we take $\e_k=1/k^\frac{3}{2}$, (and the corresponding $c_k$), in order to show convergence to the minimum  norm minimizer and also $\e_k\equiv 0$ to show that in this case our algorithm does not converge anymore to the minimum  norm minimizer, see Figure 3(a).  Note that in case $\e_k\equiv 0$ the parameter $c_k$ in Algorithm \eqref{tdiscgenp} becomes $c_k=\frac{2s^2}{(k-1)^qk^q}$, hence the term $c_kx_k$ in the formulation of $y_k$ still has the role of a Tikhonov regularization term. Further, we consider the case $c_k \equiv 0$, but $\e_k=1/k^\frac{3}{2}$, see Figure 3(b).

\begin{figure}[hbt!]
\begin{subfigure}{.5\textwidth}
  \centering
  \includegraphics[width=.99\linewidth]{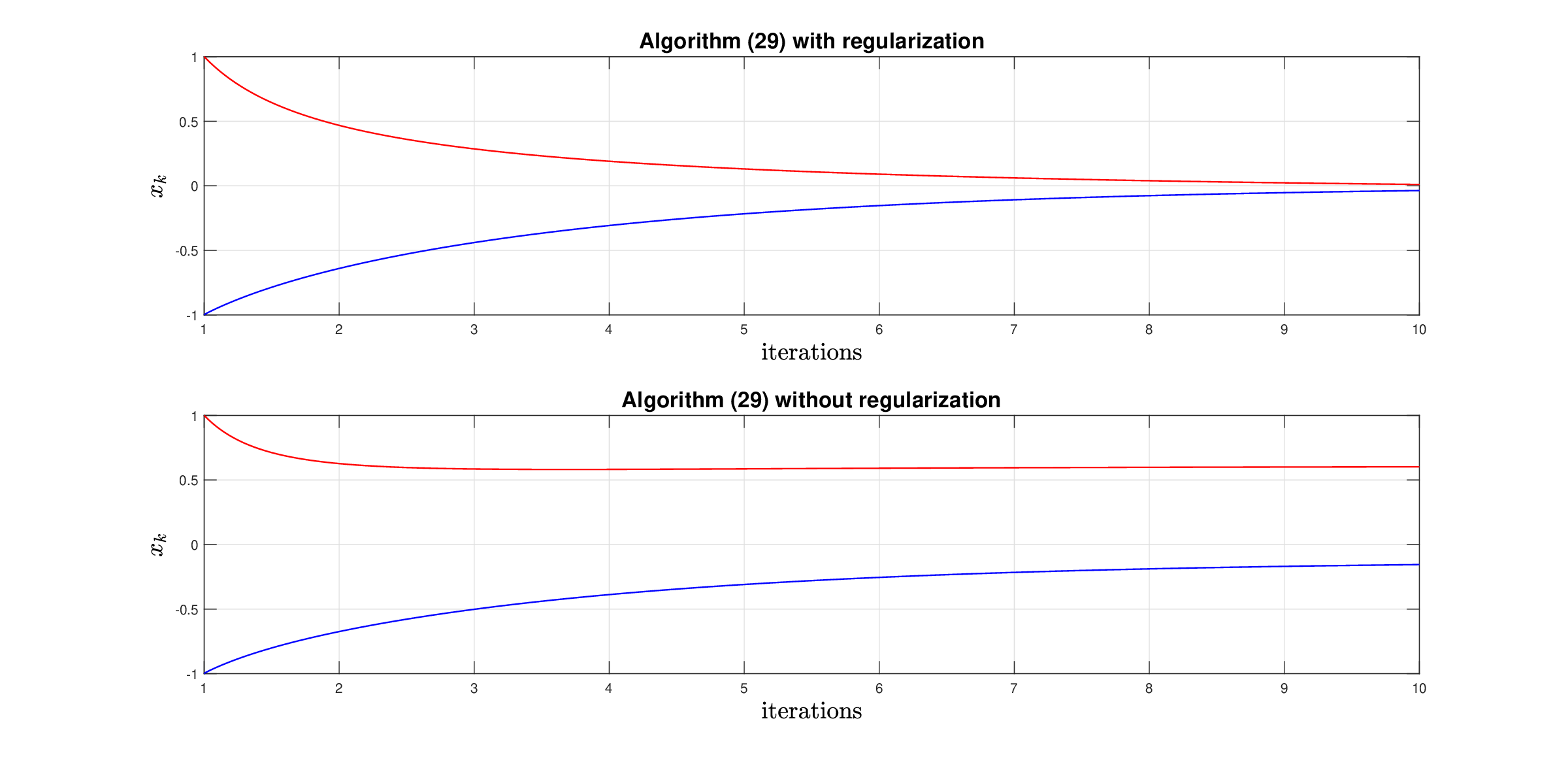}
  \caption{The absence of the term $\e_ky_k$}
  \label{fig2:sfig21}
\end{subfigure}
\begin{subfigure}{.5\textwidth}
  \centering
  \includegraphics[width=.99\linewidth]{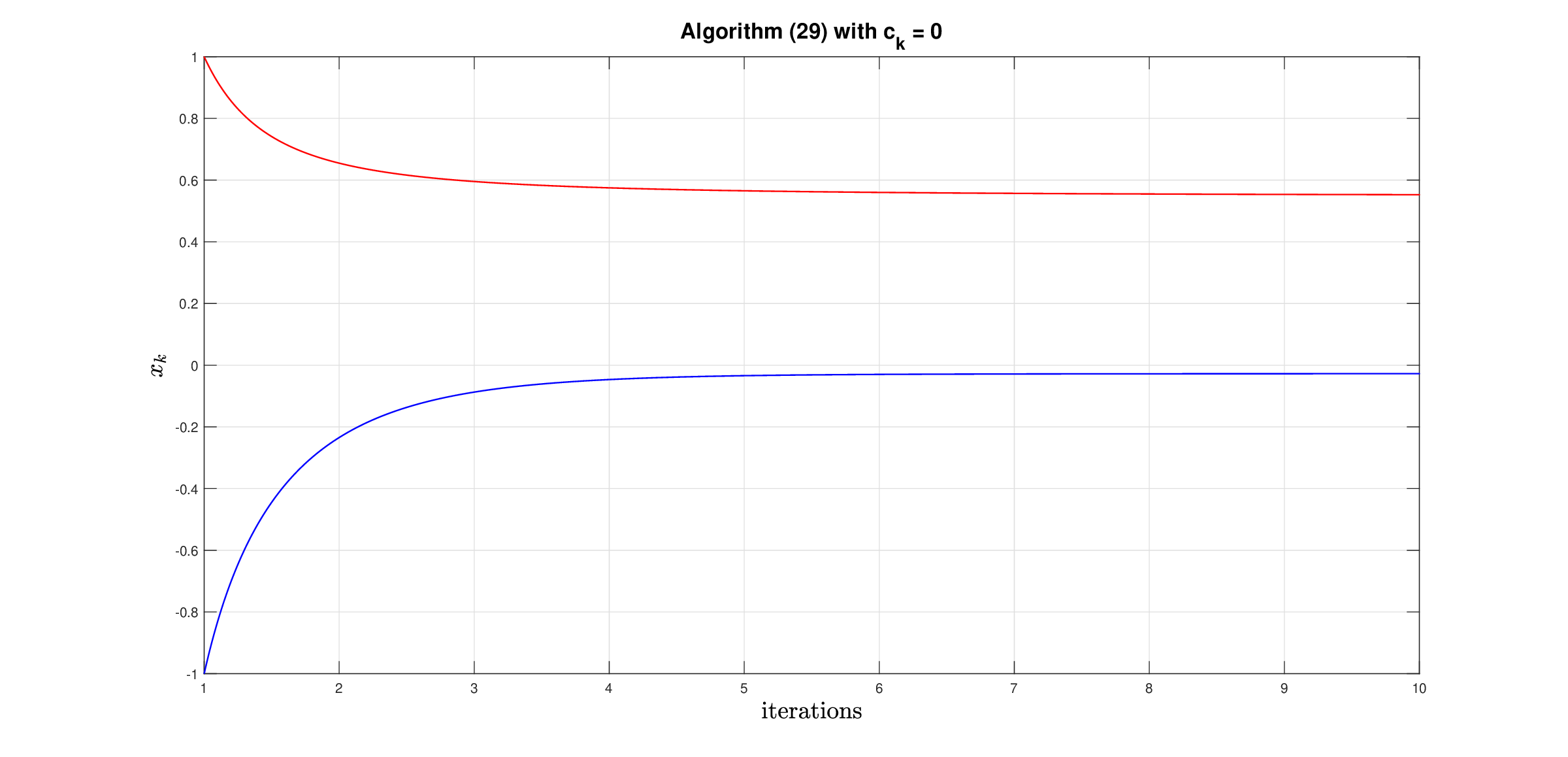}
  \caption{ The absence of the term $c_kx_k$}
  \label{fig2:sfig22}
\end{subfigure}
 \caption{Dropping one of the Tikhonov regularization terms in Algorithm \eqref{tdiscgenp} we do not have convergence to the minimum norm solution anymore.}
\end{figure}

As we can see, in the absence of one of the Tikhonov regularization terms  we do not have  the convergence to the element of the minimal norm. Hence,  according to the last two figures  the presence of double Tikhonov regularization terms in our algorithm is fully justified.

%\end{comment}

\section{Conclusions, perspectives}

Due to our best knowledge, Algorithm \eqref{tdiscgen} and in particular Algorithm \eqref{tdiscgenp} are the first inertial gradient type algorithms considered in the literature that assure strong convergence to the minimum norm minimizer of a smooth convex function and also fast convergence of the function values and  discrete velocity. As we have emphasized in the paper these algorithms can be seen as Nesterov type algorithms with two Tikhonov regularization terms. Despite of the complex structure of the inertial parameter and  one of the Tikhonov regularization parameters our algorithms can easily be implemented, therefore are suitable for use in practical problems arising in image processing and machine learning. As a future related research we mention here the forward-backward algorithms with Tikhonov regularization associated to the minimization problem having in its objective the sum of a proper convex lower semicontinuous function and a smooth convex function with Lipschitz continuous gradient. In our opinion similar results to those provided in Theorem \ref{strongconvergencep} can be obtained. Indeed, the success of such research is promising taking into account that in \cite{L-mapr} strong convergence of an inertial-proximal algorithm to the minimal norm minimizer of a proper convex and lower semicontinuous function is shown, meanwhile in the present paper we obtained similar results for inertial gradient type algorithm in connection to a smooth convex optimization problem.

\section{Declarations}

\begin{center}
    \textbf{Availability of data and materials}
\end{center}

In this manuscript only the datasets generated by authors were analysed.

\begin{center}
    \textbf{Competing interests}
\end{center}

The authors have no competing interests.

\end{document}